\newtheorem{Theorem}{Theorem}[section]
\newtheorem{Lemma}[Theorem]{Lemma}
\newtheorem{Corollary}[Theorem]{Corollary}
\newtheorem{Proposition}[Theorem]{Proposition}
\newtheorem{Example}[Theorem]{Example}
\begin{document}
\title{On The Fixatic Number of Graphs}

%

\author{Muhammad Fazil, Imran Javaid}
\keywords{fixing number, fixatic partition, fixatic number\\
\indent 2010 {\it Mathematics Subject Classification.} 05C25, 05C70\\
\indent $^*$ Corresponding author: mfazil@bzu.edu.pk}
\address{Centre for advanced studies in Pure and Applied Mathematics,
Bahauddin Zakariya University Multan, Pakistan, Email:
{mfazil@bzu.edu.pk, imran.javaid@bzu.edu.pk}}

\date{}
\maketitle
\begin{abstract} The fixing number of a graph $G$ is the smallest cardinality of a
set of vertices $F\subseteq V(G)$ such that only the trivial
automorphism of $G$ fixes every vertex in $F$. Let $\Pi$ $=$
$\{F_1,F_2,\ldots,F_k\}$ be an ordered $k$-partition of $V(G)$. Then
$\Pi$ is called a {\it fixatic partition} if for all $i$; $1\leq
i\leq k$, $F_i$ is a fixing set for $G$. The cardinality of a
largest fixatic partition is called the {\it fixatic number} of $G$.
In this paper, we study the fixatic numbers of graphs. Sharp bounds
for the fixatic number of graphs in general and exact values with
specified conditions are given. Some realizable results are also
given in this paper.

\end{abstract}

\section{Introduction}
Let $G$ be a graph with vertex set $V(G)$ and edge set $E(G)$. The
{\it neighborhood} of a vertex $v$ of $G$ is the set $N(v) = \{u\in
V(G)\ :\ uv\in E(G)\}$. The number of vertices in $N(v)$ is the {\it
degree} of $v$, denoted by $d(v)$. A vertex of degree one is called
a \emph{leaf} or a \emph{pendant vertex}. If two distinct vertices
$u$ and $v$ of $G$ have the property that $N(u)- \{v\} = N(v)-
\{u\}$, then $u$ and $v$ are called {\it twin vertices} (or simply
twins) in $G$. If for a vertex $u$ of $G$, there exists a vertex
$v\neq u$ in $G$ such that $u, v$ are twins in $G$, then $u$ is said
to be a {\it twin} in $G$. A set $T\subseteq V(G)$ is said to be a
{\it twin-set} in $G$ if any two of its elements are twins.

An \textit{automorphism} $\alpha$ of $G$, $\alpha: V(G)\rightarrow
V(G),$ is a bijective mapping such that $\alpha(u)\alpha(v)\in E(G)
$ if and only if $uv \in E(G).$ Thus, each automorphism $\alpha$ of
$G$ is a permutation of the vertex set $V(G)$ which preserves
adjacencies and non-adjacencies. The \textit{automorphism group} of
a graph $G$, denoted by $\Gamma(G)$, is the set of all automorphisms
of a graph $G$. A connected graph $G$ is \emph{symmetric} if
$\Gamma(G)\neq \{id\}$. The \emph{stabilizer} of a vertex $v\in
V(G)$, denoted $\Gamma_{v}(G)$, is the set $\{\alpha \in \Gamma(G) :
\alpha(v)=v\}$. The stabilizer of a set of vertices $F\subseteq
V(G)$ is $\Gamma_{F}(G)=\{\alpha\in \Gamma(G) : \alpha(v)=v,
\forall\ v\in F\}$. Note that $\Gamma_{F}(G)=\bigcap_{v \in F}
\Gamma_{v}(G)$. For a vertex $v$ of a graph $G$, the \emph{orbit} of
$v$, denoted $\mathcal{O}(v)$, is the set of all vertices $\{u \in
V(G) : \alpha(v)=u \,\,\mbox {for\,\, some}\,\, \alpha\in
\Gamma(G)\}$. Two vertices $u$ and $v$ are \emph{similar} if they
belong to the same orbit. 

A vertex $v$ is \textit{fixed} by a group element $g \in \Gamma(G)$
if $g\in \Gamma_{v}(G)$. A set of vertices $F \subseteq V (G)$ is a
\textit{fixing set} of $G$ if $\Gamma_{F}(G)$ is trivial. The
\textit{fixing number} of a graph $G$, denoted by $fix(G)$, is the
smallest cardinality of a fixing set of $G$ \cite{Erw}. The graphs
with $fix(G) = 0$ are the \emph{rigid graphs} \cite{alb}, which have
trivial automorphism group. Every graph $G$ has a fixing set (the
set of vertices of $G$ itself). It is also clear that, any set
containing all but one vertex is a fixing set. Thus, for a graph $G$
on $n$ vertices, $0\leq fix(G)\leq n-1$ \cite{alb}.

The fixing number of a graph $G$ was defined by Erwin and Harary
\cite{Erw} for the first time. Boutin introduced the concept of
determining set and defined it as follows: A subset $D$ of vertices
in a graph $G$ is called a \emph{determining set} if whenever
$g,h\in \Gamma(G)$ with the property that $g(u) = h(u)$ for all
$u\in D$, then $g(v) = h(v)$ for all $v\in V(G)$ \cite{bou}. The
minimum cardinality of a determining set is called the
\emph{determining number}. In \cite{Gib}, it was shown that fixing
set and determining set are equivalent. A considerable literature
has been developed in this field (see \cite{cac, Gib, Har, slt}).
The concept of fixing number originates from the idea of breaking
symmetries in graphs, which have applications in the problem of
programming a robot to manipulate objects \cite{ly}.

Given an ordered set $W=\{w_{1}, w_{2},...,w_{k}\}$ of vertices of a
connected graph $G$ and a vertex $v$ of $G$, the locating code of
$v$ with respect to $W$ is $(d(v,w_{1}),d(v,w_{2})...,d(v,w_{k}))$,
denoted by $c_{W}(v)$. The set $W$ is called a \emph{locating set}
for $G$ if distinct vertices have distinct codes.  The
\emph{location number} of a graph $G$, denoted by $loc(G)$, is the
minimum cardinality of a locating set for $G$. This notion was
introduced by Slater in \cite{slt}. Independently, Harary and Melter
\cite{mel} studied this notion and used the term \emph{metric
dimension} rather than the location number.

A partition of vertex set of a graph is the partition of the
vertices into subsets with some specific conditions. In graph
theory, a number of partitions of the vertices of a graph have been
introduced. In 1977, Cockayne and Hedetnieme \cite{Coc} introduced
the concept of a \emph{domatic partition} in which each subset is a
dominating set of a graph. In 2000, Chartrand, Salehi and  Zhang
\cite{csz} defined a {\it resolving partition} which corresponds to
resolving sets of a graph. In 2002, Chartrand, Erwin, Henning,
Slater and P. Zhang \cite{cehsz} introduced the concept of a
\emph{locating- chromatic partition} in which every class is a color
class. In 2013, Salman, Javaid and Chaudhary \cite{sim} defined a
{\it locatic partition} in which each subset is a locating set. The
maximum number of classes in a locatic partition is called the {\it
locatic number} of $G$ and is denoted by $\mathcal{L}(G)$.

In this paper, we define the fixatic partition and the corresponding
fixatic number of any connected graph $G$ of order $n$ as follows:\\
Let $\Pi$ $=$ $\{F_1,F_2,\ldots,F_k\}$ be an ordered $k$-partition
of $V(G)$. Then $\Pi$ is called a {\it fixatic partition} if for all
$i$; $1\leq i\leq k$, $F_i$ is a fixing set for $G$. The {\it
fixatic number} of $G$, denoted by $F_{xt}(G)$, is the maximum
number of classes in a fixatic partition.  We use $\Pi_{t}$ to
denote the number of possible fixatic partitions having $F_{xt}(G)$
number of classes.




%


Unless otherwise specified, all the graphs $G$ considered in this
paper are simple, connected and symmetric.

This paper is organized as follows. Section 2 provides the study of
fixatic number of graphs and join graphs. Some bounds on fixatic
number with some certain conditions are also given. Section 3
establishes the connections between the fixing and fixatic number of
graphs in the form of realizable results.



\section{Fixatic Number of Graphs}
We begin this section with an example to explain the concept of this
new parameter, that is, we will discuss the technique to find the
fixatic partition and the corresponding fixatic number of a
connected graph $G$.

\begin{Example}\label{exp1l}(co-rising sun graph)

\begin{figure}[h]
        \centerline
        {\includegraphics[width=4cm]{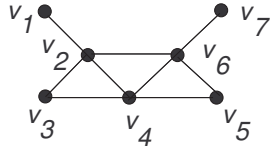}}
        \caption{The graph with $fix(G)= 1.$}\label{f1}
\end{figure}
From the graph $G$ of Figure \ref{f1}, we note that, each vertex of
$G$ forms a fixing set except $v_{4}$, so $F_{xt}(G)=6$. Total
number of fixatic partitions $\Pi_{t}$ of $V(G)$ into $F_{xt}$
classes is $6$.
\end{Example}
Every class in a fixatic partition of $V(G)$ is a fixing set (not
necessarily minimum fixing set) for $G$. Let $\Pi =
\{F_1,F_2,\ldots,F_k\}$ be a fixatic partition of $V(G)$ and
$fix(G)$ is the fixing number of $G$, then $|F_i|\geq fix(G)$ for
all $i$; $1\leq i \leq k$. Each $F_{i}$ in $\Pi$ will be referred to
as fixatic class. Note that, if $fix(G)> \lfloor\frac{n}{2}\rfloor$
for all $i$; $1\leq i\leq k$, then a fixatic partition $\Pi$ of
$V(G)$ will have only one class. However, $F_{xt}(G) = 1$, does not
imply that $fix(G) \ngtr \lfloor\frac{n}{2}\rfloor$. For example,
$F_{xt}(K_{1,3})=1$ though $fix(G) = 2= \lfloor\frac{n}{2}\rfloor$.

Note that, if $F_{xt}(G)=n$, then there exists a unique fixatic
partition $\Pi$ of $V(G)$ that contains $n$ disjoint classes each of
cardinality one, and hence $|\Gamma_v(G)|=1$ for all $v\in V(G)$.
Again, if $|\Gamma_v(G)|=1$ for all $v\in V(G)$, then $fix(G)=1$,
and hence $F_{xt}(G)=n$. Thus, we have the following proposition:
\begin{Proposition} Let $G$ be a connected graph, then $F_{xt}(G)=n$
if and only if the stabilizer of each of its vertex is trivial.
\end{Proposition}
An automorphism $\beta \in\Gamma_{V(G)-\{u,v\}}(G)$
\emph{interchanges} two vertices $u$ and $v$ of a graph $G$ if
$\beta(u)=v$ and $\beta(v)=u$.

\begin{Lemma}\label{mmm} Let $u,v\in V(G)$ and $\beta \in \Gamma (G)$
interchanges $u$ and $v$, then $F_{xt}(G)\geq 2$ if and only if
there exist two fixatic classes $F_i$ and $F_j$, $i\neq j$ in any
fixatic partition $\Pi$ such that $u\in F_i$ and $v\in F_j$.
\end{Lemma}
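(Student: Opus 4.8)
The plan is to handle the two implications of the biconditional separately, the only substantive ingredient being the following remark about $\beta$: since $\beta(u)=v\neq u$, $\beta$ is a nontrivial automorphism, and it fixes every vertex of $V(G)\setminus\{u,v\}$; hence any subset of $V(G)$ meeting neither $u$ nor $v$ has $\beta$ in its stabilizer and is therefore not a fixing set. (Equivalently, the hypothesis says exactly that $u$ and $v$ are twins in $G$.)

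For the implication $(\Leftarrow)$, suppose some fixatic partition $\Pi$ has distinct classes $F_i$ and $F_j$ with $u\in F_i$ and $v\in F_j$. Then $\Pi$ has at least two classes, so $F_{xt}(G)\geq|\Pi|\geq 2$. This direction uses nothing about $\beta$.

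For the implication $(\Rightarrow)$, assume $F_{xt}(G)\geq 2$ and let $\Pi=\{F_1,\dots,F_k\}$ be a fixatic partition with $k\geq 2$ (for instance a maximum one). I would first invoke the remark above in the form: every class $F_\ell$ contains $u$ or $v$, for otherwise $\beta$ would be a nontrivial element of $\Gamma_{F_\ell}(G)$, contradicting that $F_\ell$ is a fixing set. Because $\Pi$ is a partition, $u$ lies in a unique class $F_i$ and $v$ in a unique class $F_j$, and by the previous sentence every class equals $F_i$ or $F_j$; hence $k\leq 2$, so $k=2$. If $i=j$ this one class would be all of $\Pi$, contradicting $k=2$; hence $i\neq j$, and $u,v$ lie in distinct classes of $\Pi$, as required. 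Since $\Pi$ was an arbitrary fixatic partition with at least two classes, these two classes occur in every such $\Pi$.

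The proof is short and I do not expect a genuine obstacle; the single point that must be got right is the opening remark, i.e.\ recognizing that an automorphism interchanging $u$ and $v$ while fixing everything else forbids any fixing set, hence any fixatic class, from omitting both $u$ and $v$. It is also worth noting in passing that the same computation yields $F_{xt}(G)\leq 2$ whenever such a $\beta$ exists, which is presumably how the lemma will be used later.
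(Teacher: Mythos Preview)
Your proof is correct and follows essentially the same approach as the paper: both rest on the observation that any fixing set must contain $u$ or $v$ (otherwise the nontrivial $\beta$ lies in its stabilizer), and use this to force $u$ and $v$ into distinct classes whenever a fixatic partition has at least two parts. Your version is slightly more careful in restricting attention to partitions with $k\geq 2$ and in extracting the corollary $F_{xt}(G)\leq 2$, which the paper leaves implicit.
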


\begin{proof}Suppose that, $F_{xt}(G)\geq 2$. Then each fixatic
partition $\Pi$ contains atleast two fixatic classes. Since $\beta
\in \Gamma (G)$ interchanges $u$ and $v$ so these vertices must
belong to two disjoint fixatic classes for otherwise if they belong
to the same fixatic class say $F_k$, then $F_{xt}(G)\geq 2$ implies
that $F_l$ $(l\neq k)$ is not a fixing set for $G$. Conversely,
suppose that, $F_{xt}(G)< 2$. Then there is a unique fixatic
partition $\Pi$ that contains all the vertices of $G$, which is
contradiction.
\end{proof}

We can find the total number of surjections $g$ from an $n$-set to a
$k$-set by using the following theorem.
\begin{Theorem}{\em \cite{biggs}}\label{thh}
Let $S_r$ denotes the set of surjections from an $n$-set to a
$k$-set, then $|S_r| = k!S(n,k)$, where $S(n,k)$ denotes the number
of partitions of an $n$-set into $k$ classes.
\end{Theorem}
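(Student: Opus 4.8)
The plan is to exhibit an explicit bijection. Write $A$ for the $n$-set and $B=\{b_1,\dots,b_k\}$ for the $k$-set, so that $S_r$ is the set of surjections $g:A\to B$. To a surjection $g$ I associate the ordered tuple of its fibres $\bigl(g^{-1}(b_1),g^{-1}(b_2),\dots,g^{-1}(b_k)\bigr)$. By surjectivity each fibre is nonempty, distinct elements of $B$ have disjoint fibres, and the fibres cover $A$; hence the underlying unordered family $\{g^{-1}(b_1),\dots,g^{-1}(b_k)\}$ is a partition of $A$ into exactly $k$ classes, while the ordering records a bijection from that set of $k$ classes onto $B$.

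Conversely, starting from a partition $\{C_1,\dots,C_k\}$ of $A$ into $k$ classes together with a bijection $\sigma$ from $\{C_1,\dots,C_k\}$ onto $B$, define $g_\sigma:A\to B$ by $g_\sigma(x)=\sigma(C)$, where $C$ is the unique class containing $x$. This map is well defined because the classes partition $A$, and it is surjective because $\sigma$ is onto. First I would check that the assignment $g\mapsto(\text{partition},\text{labelling})$ and the assignment $(\text{partition},\text{labelling})\mapsto g_\sigma$ are mutually inverse; this is immediate, since a function into $B$ is completely determined by the list of its fibres. Thus $S_r$ is in bijection with the set of pairs consisting of a partition of $A$ into $k$ classes together with a bijection from its classes onto $B$.

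It then remains to count such pairs. By definition there are $S(n,k)$ partitions of $A$ into $k$ classes, and for each of them there are exactly $k!$ bijections from its $k$ classes onto the $k$-set $B$; these two choices are independent, so the number of pairs is $k!\,S(n,k)$, giving $|S_r|=k!\,S(n,k)$. There is essentially no obstacle in this argument; the only point needing slight care is the verification that the two constructions invert one another, which reduces to the elementary fact that a map is recovered from the list of its fibres. (One could instead derive the formula from the inclusion–exclusion expression for $|S_r|$ or from the recurrence for $S(n,k)$, but the bijective proof is cleaner and is the one I would present.)
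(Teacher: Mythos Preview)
Your argument is correct: the bijection between surjections $g:A\to B$ and pairs (partition of $A$ into $k$ nonempty blocks, bijection from the blocks onto $B$) is well defined, and the two constructions are mutual inverses for exactly the reason you state. The count $k!\,S(n,k)$ then follows immediately.

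As for comparison with the paper: there is nothing to compare. The paper does not prove this theorem at all; it is quoted as a known result from Biggs \cite{biggs} and used as a tool in the subsequent proposition on $C_n$. So your bijective proof is not competing with an alternative approach in the paper --- it simply supplies the argument the paper omits. The proof you give is the standard one (and indeed the one in Biggs), so there is no meaningful divergence to discuss.
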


The number of surjections from an $n$-set to a $k$-set
$\{x_1,x_2,\ldots,x_k\}$, with the property that $n_1$ objects go
into the first set $x_1$, $n_2$ go into $x_2$ and so on, is a
multinomial number ${n \choose n_1,n_2,\ldots,n_k}$. We denote, the
number of partitions of an $n$-set into $k$ classes with each class
of cardinality $i$ by $S(n, k(i))$. \par The following proposition
provides the fixatic number and total number of fixatic partitions
of cycle $C_{n}, n\geq 3$ on odd number of vertices by using the
Theorem \ref{thh}:
\begin{Proposition}Let $C_{n}$ be a cycle on $n\geq 3$ odd number of
vertices, then $F_{xt}(C_n)= \lfloor \frac{n}{2}\rfloor$ and
$\Pi_{t}= \frac{1}{(\lfloor \frac{n}{2}\rfloor - 1)!}{n\choose
3}{n-3 \choose \underbrace{2,\ldots,2}_{(\lfloor \frac{n}{2}\rfloor
- 1)\mbox{-times}}}.$
\end{Proposition}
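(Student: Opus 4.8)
The plan is to first establish the value $F_{xt}(C_n) = \lfloor n/2 \rfloor$ for odd $n$, and then use that value together with the counting machinery of Theorem~\ref{thh} to determine $\Pi_t$. Recall that for an odd cycle $C_n$ every single vertex is similar to every other (the automorphism group is dihedral and vertex-transitive), so $fix(C_n) \geq 2$; in fact $fix(C_n) = 2$ since fixing any two vertices kills all rotations and the only reflection fixing them both is forced to be the identity when $n$ is odd (a reflection of an odd cycle fixes exactly one vertex). Moreover, a set of two vertices of $C_n$ is a fixing set if and only if the two vertices are \emph{not} reflections of one another through a common axis — but for odd $n$, any axis of symmetry passes through exactly one vertex and the midpoint of the opposite edge, so no two distinct vertices are swapped by a reflection, and hence \emph{every} $2$-element subset of $V(C_n)$ is a fixing set. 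This is the key structural fact that makes the count clean.

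First I would prove the upper bound $F_{xt}(C_n) \leq \lfloor n/2 \rfloor$. Since each fixatic class has cardinality at least $fix(C_n) = 2$ and the classes partition an $n$-set with $n$ odd, there can be at most $\lfloor n/2 \rfloor$ of them. Next I would prove the lower bound $F_{xt}(C_n) \geq \lfloor n/2 \rfloor$ by exhibiting a fixatic partition with that many classes: take $\lfloor n/2 \rfloor - 1$ classes of size $2$ and one class of size $3$ (using $n = 2\lfloor n/2\rfloor + 1$). By the structural fact above, every $2$-element class is a fixing set, and the $3$-element class contains a $2$-element subset which is a fixing set, hence is itself a fixing set. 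This shows equality, $F_{xt}(C_n) = \lfloor n/2 \rfloor$.

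For the enumeration of $\Pi_t$, note that a fixatic partition into $k = \lfloor n/2 \rfloor$ classes must, by the cardinality constraint, consist of exactly one class of size $3$ and $k-1$ classes of size $2$ (any other size distribution summing to an odd $n$ with $k$ parts each $\geq 2$ is impossible). Since \emph{every} subset of size $2$ or $3$ is a fixing set, the fixatic partitions into $k$ classes are in bijection with \emph{unordered} partitions of $V(C_n)$ into one block of size $3$ and $k-1$ blocks of size $2$. The count is therefore: choose the $3$-set in $\binom{n}{3}$ ways, then partition the remaining $n-3$ vertices into $k-1 = \lfloor n/2\rfloor - 1$ unordered blocks of size $2$, which is $\frac{1}{(\lfloor n/2\rfloor - 1)!}\binom{n-3}{2,\ldots,2}$ (the multinomial counts \emph{ordered} sequences of pairs, and we divide by $(\lfloor n/2\rfloor-1)!$ to unorder the equal-sized blocks). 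This is exactly the claimed formula.

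The main obstacle, and the step deserving the most care, is the structural claim that every $2$-subset of an odd cycle is a fixing set — equivalently, that no reflection of $C_n$ (for $n$ odd) interchanges two vertices. One should argue this explicitly: a reflection either fixes a vertex and the opposite edge-midpoint, or fixes an edge-midpoint and the opposite vertex; in the odd case the axis through any vertex necessarily hits an edge-midpoint on the other side, so the orbit structure of that reflection has exactly one fixed vertex and $(n-1)/2$ transpositions, but crucially \emph{which} pairs get transposed is determined by the fixed vertex, and across all $n$ reflections no single pair $\{u,v\}$ is ever left unbroken once we also insist on killing rotations — one must check that fixing $u$ and $v$ simultaneously rules out every reflection. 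A short case analysis on the distance between $u$ and $v$ along the cycle settles this. Once this is in hand, the rest is the routine multinomial bookkeeping sketched above, invoking Theorem~\ref{thh} only implicitly through the standard formula for partitioning into equal-sized blocks.
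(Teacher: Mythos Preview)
Your approach is essentially identical to the paper's: both use $fix(C_n)=2$ together with the fact that every $2$-subset of an odd cycle is a fixing set to obtain $F_{xt}(C_n)=\lfloor n/2\rfloor$, and then count partitions into one triple and $\lfloor n/2\rfloor-1$ unordered pairs via the multinomial coefficient (the paper phrases this last step through Theorem~\ref{thh}, you do it directly, but the computation is the same).

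One wording correction in your justification of the structural claim: the sentence ``no two distinct vertices are swapped by a reflection'' is false---every reflection of an odd cycle transposes $(n-1)/2$ pairs of vertices. What you actually need (and what you in fact state correctly a few lines later) is that each reflection of $C_n$ with $n$ odd fixes exactly \emph{one} vertex, so no non-identity automorphism can pointwise fix two distinct vertices; this already shows every $2$-subset is a fixing set, and no case analysis on the distance between $u$ and $v$ is required.
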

\begin{proof} Note that $fix(C_{n})=2$ and
all pair of vertices form a fixing set. Hence, $F_{xt}(C_n)=\lfloor
\frac{n}{2}\rfloor$.\\ Any partition of $V(G)$ of cardinality
$\lfloor \frac{n}{2}\rfloor$ contains $\lfloor \frac{n}{2}\rfloor-1$
classes of order two and 1 class of order three. The class having
three vertices can be find in ${n\choose 3}$ different possible ways
and $\lfloor \frac{n}{2}\rfloor-1$ classes of order two in $S(n-3,
(\lfloor \frac{n}{2}\rfloor-1)(2))$ different possible ways: the
number of partitions of an $(n-3)$- set, say $\{v_{1},..., v_i,
v_{i+4},..., v_{n}\}$ into $(\lfloor \frac{n}{2}\rfloor-1)$ classes each of order 2.\\
For this, we find the number of surjections from an $(n-3)$-set,
$U=\{v_{1},..., v_i, v_{i+4},..., v_{n}\}$ to a $(\lfloor
\frac{n}{2}\rfloor-1)$-set $X=\{x_1,x_2,...,x_k\}$ which distributes
the vertices of $U$ into the sets of $X$, in such a way that, each
set $x_{i}, 1\leq i\leq k$ receives exactly two vertices from $U$.
Let $S_r$ denotes the set of surjections from an $(n-3)$-set $U$ to
a $(\lfloor \frac{n}{2}\rfloor-1)$-set $X$, with the property that,
two vertices go into the first set $x_1$, two go into $x_2$, and so
on, then $|S_{r}| = {n-3 \choose \underbrace{2,\ldots,2}_{(\lfloor
\frac{n}{2}\rfloor - 1)\mbox{-times}}}.$\\ By Theorem \ref{thh}, we
have $|S_r|=S(n-3,(\lfloor \frac{n}{2}\rfloor-1)(2))\cdot(\lfloor
\frac{n}{2}\rfloor-1)!$ which implies that, $S(n-3,(\lfloor
\frac{n}{2}\rfloor-1)(2))=\frac{|S_{r}|}{(\lfloor
\frac{n}{2}\rfloor-1)!}.$ Hence, $\Pi_{t}= \frac{1}{(\lfloor
\frac{n}{2}\rfloor - 1)!}{n\choose 3}|S_{r}|.$
\end{proof}

%

Sharp upper and lower bounds for fixatic number of a connected graph
in terms of the fixing and location numbers, are given in the
following lemma:

\begin{Lemma}\label{lemf1} Let $G$ be a connected graph of order $n\geq
2$ and $\mathcal{L}(G)$ be the locatic number of $G$, then
$\mathcal{L}(G)\leq F_{xt}(G)\leq \lfloor\frac{n}{fix(G)}\rfloor.$
Both bounds are sharp.
\end{Lemma}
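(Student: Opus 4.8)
The plan is to establish the two inequalities independently and then exhibit graphs attaining each of them.

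\textbf{Upper bound.} I would start from an arbitrary fixatic partition $\Pi=\{F_1,\dots,F_k\}$ realizing $k=F_{xt}(G)$. As noted in the discussion following Example~\ref{exp1l}, every fixatic class is a fixing set, so $|F_i|\ge fix(G)$ for each $i$. Since $\Pi$ partitions $V(G)$, summing cardinalities gives $n=\sum_{i=1}^{k}|F_i|\ge k\cdot fix(G)$, hence $k\le n/fix(G)$, and as $k$ is an integer this yields $F_{xt}(G)=k\le\lfloor n/fix(G)\rfloor$. (The quotient is well defined because $G$ is assumed symmetric, so $fix(G)\ge 1$.)

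\textbf{Lower bound.} The key step is the observation that every locating set of $G$ is a fixing set of $G$. Indeed, let $W$ be a locating set and let $\alpha\in\Gamma_{W}(G)$. For every $v\in V(G)$ and every $w\in W$, since $\alpha$ is an automorphism with $\alpha(w)=w$, we have $d(\alpha(v),w)=d(\alpha(v),\alpha(w))=d(v,w)$, so $c_{W}(\alpha(v))=c_{W}(v)$; as $W$ is locating this forces $\alpha(v)=v$. Hence $\alpha=id$, i.e. $W$ is a fixing set. (Alternatively, invoke that resolving sets are determining sets \cite{bou} and that determining sets coincide with fixing sets \cite{Gib}.) Consequently any locatic partition of $V(G)$—a partition all of whose classes are locating sets—is in particular a fixatic partition. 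Applying this to a locatic partition with the maximum number $\mathcal{L}(G)$ of classes produces a fixatic partition with $\mathcal{L}(G)$ classes, so $F_{xt}(G)\ge\mathcal{L}(G)$.

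\textbf{Sharpness.} Both bounds can be attained simultaneously. For odd $n\ge 3$ we have $fix(C_n)=2$ and, by the preceding proposition on odd cycles, $F_{xt}(C_n)=\lfloor n/2\rfloor=\lfloor n/fix(C_n)\rfloor$; moreover every pair of vertices of an odd cycle is a locating set, so the same partition counting shows $\mathcal{L}(C_n)=\lfloor n/2\rfloor=F_{xt}(C_n)$, making the lower bound tight as well. A second, degenerate witness is $K_n$ with $n\ge 3$, where $fix(K_n)=loc(K_n)=n-1$ forces $\mathcal{L}(K_n)=F_{xt}(K_n)=1=\lfloor n/(n-1)\rfloor$. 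I expect the only non-routine point to be the lower bound—namely recognizing that locating sets are fixing sets, so that locatic partitions are automatically fixatic partitions; everything else is bookkeeping. The minor care points are the symmetry hypothesis (ensuring $fix(G)\ge 1$ so the floor is meaningful) and confirming that the sharpness examples are connected symmetric graphs.
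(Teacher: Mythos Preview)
Your argument is correct and follows essentially the same route as the paper: the upper bound via $n=\sum_i|F_i|\ge k\cdot fix(G)$ is identical, and your lower bound is just a more explicit version of the paper's one-line appeal to ``$fix(G)\le loc(G)$'' (which really means, as you spell out, that every locating set is a fixing set, so locatic partitions are fixatic). The only cosmetic difference is in the sharpness witnesses---the paper uses $P_{2n}$ for the upper bound and $K_n$ for the lower, whereas you use odd cycles and $K_n$; both sets of examples work.
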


\begin{proof} 
For $F_{xt}(G)= k$, $n=\sum \limits_{i=1}^{k}|F_{i}|\geq kfix(G)$
which implies that $F_{xt}(G)\leq \lfloor\frac{n}{fix(G)}\rfloor.$
Lower bound follows from the fact that $fix(G)\leq loc(G)$.\\ For
the sharpness of the upper bound, take $G=P_{2n}, n\geq 1$,
we have a single fixatic partition $\Pi$ having $n$ maximum number of classes.\\
For the sharpness of the lower bound, take $G=K_{n}, n\geq 2$, then
$fix(G)=n-1=loc(G)$, and hence we have a single fixatic partition
$\Pi$ having one class that consists of all the vertices of a graph
$G$.
\end{proof}

Since $fix(G)=fix(\overline{G})$, so by Lemma \ref{lemf1}, we have
the following corollary:
\begin{Corollary} Let $G$ be a connected graph of order $n\geq 2$,
then $2 \leq F_{xt}(G)+F_{xt}(\overline{G})\leq 2n.$ Both bounds are
sharp.
\end{Corollary}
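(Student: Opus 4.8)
The plan is to read both inequalities off Lemma~\ref{lemf1}, applied to $G$ and to $\overline{G}$ simultaneously, together with the stated identity $fix(G)=fix(\overline{G})$; the sharpness claims will be handled by exhibiting one extremal graph for each bound.

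For the upper bound I would use Lemma~\ref{lemf1} twice: $F_{xt}(G)\le\lfloor n/fix(G)\rfloor$ and $F_{xt}(\overline{G})\le\lfloor n/fix(\overline{G})\rfloor=\lfloor n/fix(G)\rfloor$, so
\[
F_{xt}(G)+F_{xt}(\overline{G})\ \le\ 2\Big\lfloor\frac{n}{fix(G)}\Big\rfloor\ \le\ 2n ,
\]
the last step being valid because $fix(G)\ge 1$. Here the one point worth flagging is that $fix(G)\ge 1$ is exactly the standing hypothesis that $G$ is symmetric (a rigid graph would have $fix=0$), and that the upper-bound half of Lemma~\ref{lemf1} uses no connectivity in its proof, so it still applies to $\overline{G}$ even if $\overline{G}$ happens to be disconnected. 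For the lower bound I would simply observe that for any graph $H$ the one-class partition $\{V(H)\}$ is a fixatic partition, since $V(H)$ is always a fixing set, whence $F_{xt}(H)\ge 1$; taking $H=G$ and $H=\overline{G}$ gives the sum $\ge 2$.

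For sharpness of the upper bound I would take $G=P_4$ (or, more generally, $G=P_{2m}$ as in the proof of Lemma~\ref{lemf1}): its automorphism group is generated by a reflection that fixes no vertex, and since $\Gamma(\overline{G})=\Gamma(G)$ the same holds for $\overline{G}$, so every singleton is a fixing set in both graphs and $F_{xt}(G)=F_{xt}(\overline{G})=|V(G)|$, attaining $2n$. For sharpness of the lower bound I would take $G=K_n$ with $n\ge 3$: then $fix(K_n)=n-1=fix(\overline{K_n})$ and $n-1>\lfloor n/2\rfloor$, and a graph whose fixing number exceeds half its order admits no two disjoint fixing sets, so $F_{xt}(K_n)=F_{xt}(\overline{K_n})=1$ and the sum equals $2$. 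There is no genuine obstacle in this argument; the only care points are the justification $fix(G)\ge 1$ from symmetry (so that $\lfloor n/fix(G)\rfloor\le n$) and the bookkeeping in the sharpness examples, including the harmless fact that $\overline{K_n}$, though disconnected, still has a well-defined fixatic number. I would also remark that $\Gamma(G)=\Gamma(\overline{G})$ in fact forces $F_{xt}(G)=F_{xt}(\overline{G})$, so the corollary is equivalent to the trivial bounds $1\le F_{xt}(G)\le n$, but I would keep the proof routed through Lemma~\ref{lemf1} as the discussion preceding the corollary indicates.
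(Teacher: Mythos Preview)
Your proposal is correct and follows the same route the paper indicates: the corollary is stated immediately after the remark that $fix(G)=fix(\overline{G})$ and is meant to be read off Lemma~\ref{lemf1} applied to $G$ and $\overline{G}$, exactly as you do, with the same extremal examples ($P_{2m}$ and $K_n$) inherited from that lemma's sharpness discussion. Your additional observations---that $\Gamma(G)=\Gamma(\overline{G})$ actually forces $F_{xt}(G)=F_{xt}(\overline{G})$, and that the possible disconnectedness of $\overline{G}$ is harmless---go a bit beyond what the paper records but are accurate and helpful.
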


A vertex $v$ of a graph $G$ is called \emph{saturated} if it is
adjacent to all other vertices of $G$. Let $e$ be an edge of a
connected graph $G$. If $G-e$ is disconnected, then $e$ is called a
\emph{bridge} or a \emph{cut-edge} of $G$.

\noindent Following result shows, another upper bound for the
fixatic number of a connected graph $G$ by using the definition of a
saturated vertex.
\begin{Proposition}Let $G$ be a connected symmetric graph of order $n\geq 2$, let
$n^{'}$ be the number of its saturated vertices, then $F_{xt}(G)\leq
n-n^{'}+2$. This bound is sharp.
\end{Proposition}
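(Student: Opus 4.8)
The plan is to show that the saturated vertices are almost entirely confined to a single class of any fixatic partition. First I would record the structural fact that any two saturated vertices $u,v$ of $G$ are twins: $N(u)-\{v\}=V(G)-\{u,v\}=N(v)-\{u\}$. Hence the transposition $(u\,v)$, extended by the identity on $V(G)-\{u,v\}$, is an automorphism of $G$, and more generally, writing $S$ for the set of the $n'$ saturated vertices of $G$, every permutation of $S$ extends to an automorphism of $G$ fixing $V(G)-S$ pointwise. Consequently, if a set $F\subseteq V(G)$ omits two distinct vertices $a,b\in S$, then $(a\,b)\in\Gamma_{F}(G)$ is non-trivial, so $F$ is not a fixing set. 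Thus \emph{every fixing set of $G$ contains at least $n'-1$ of the vertices of $S$}.

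The next step is to substitute this into a fixatic partition. Let $\Pi=\{F_1,F_2,\ldots,F_k\}$ be a fixatic partition with $k=F_{xt}(G)$. Since the classes are pairwise disjoint and each $F_i$ is a fixing set, $n'=\sum_{i=1}^{k}|F_i\cap S|\geq k(n'-1)$. If $n'\geq 2$, then $n'-1\geq 1$ and this gives $k\leq\frac{n'}{n'-1}=1+\frac{1}{n'-1}\leq 2$; as $n\geq n'$, we obtain $k\leq 2\leq n-n'+2$. If $n'\leq 1$, the displayed inequality is vacuous, but the trivial bound $k\leq n$ (a partition of an $n$-set has at most $n$ non-empty classes) already yields $k\leq n\leq n-n'+2$, since $-n'+2\geq 1$. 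In either case $F_{xt}(G)\leq n-n'+2$.

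For the sharpness I would take $G=K_2$: both of its vertices are saturated, so $n=n'=2$, and $\Pi=\{\{v_1\},\{v_2\}\}$ is a fixatic partition because each singleton is a fixing set of $K_2$; hence $F_{xt}(K_2)=2=n-n'+2$.

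The only step that is not routine bookkeeping is the claim that every fixing set meets $S$ in at least $n'-1$ vertices; this follows once one observes that the saturated vertices are pairwise twins and that each transposition of two of them is an automorphism, after which the conclusion is immediate. The remainder is the elementary case analysis on $n'$ above, the one mild subtlety being that the inequality $k(n'-1)\leq n'$ carries information only when $n'\geq 2$, which is precisely why the crude bound $k\leq n$ must be invoked for $n'\in\{0,1\}$.
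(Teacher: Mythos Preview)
Your proof is correct and takes a genuinely different route from the paper's. The paper argues by contradiction via the general bound $F_{xt}(G)\leq \lfloor n/\mathrm{fix}(G)\rfloor$ of Lemma~\ref{lemf1}: assuming $F_{xt}(G)>n-n'+2$ forces $\mathrm{fix}(G)<n/(n-n'+2)$, which is then checked to be impossible (the paper only spells out the case $n'=2$, $n\geq 4$). You instead argue directly at the level of the partition: you exploit the fact that the saturated vertices form a twin set to show that every fixing set misses at most one of them, and then a straight count over the classes of a fixatic partition yields $k(n'-1)\leq n'$, hence $k\leq 2$ whenever $n'\geq 2$. This is both more elementary (it avoids the auxiliary lemma) and more informative, since it actually shows the stronger fact that $F_{xt}(G)\leq 2$ as soon as there are at least two saturated vertices; the stated bound $n-n'+2$ then follows from $n\geq n'$. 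Your case split on $n'\leq 1$ versus $n'\geq 2$ is exactly the right way to handle the degeneracy of the counting inequality, and your sharpness example $K_2$ coincides with the paper's.
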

\begin{proof}Suppose that, $F_{xt}(G)> n-n^{'}+2$. From Lemma
\ref {lemf1}, $n-n^{'}+2<F_{xt}(G)\leq \frac{n}{fix(G)}$. Note that,
$n-n^{'}+2\neq 0$ because $n^{'}\leq n$. Thus, $fix(G)<
\frac{n}{n-n^{'}+2}$. Now, if $G$ is connected graph of order $n\geq
4$ with two saturated vertices, then $fix(G)< 1$ in other words
$fix(G)=0$, which is contradiction because $G$ is symmetric.\\For
the sharpness of this bound, take $G= K_{2}$. Hence, $F_{xt}(G)\leq
n-n^{'}+2$.
\end{proof}

\begin{Proposition} Let $T$ be a twin set of a connected graph $G$
with $|T|\geq 3$, then $F_{xt}(G)\leq F_{xt}(G-u)$, for all $u\in
T$.
\end{Proposition}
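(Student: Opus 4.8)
The plan is to observe that the hypothesis $|T|\ge 3$ is so strong that it forces $F_{xt}(G)=1$, after which the inequality is immediate, since $F_{xt}(H)\ge 1$ for every connected symmetric graph $H$.

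First I would establish the basic fact that any fixing set of $G$ must contain all but at most one vertex of $T$. Indeed, suppose $F$ is a fixing set and two distinct vertices $a,b\in T$ both lie outside $F$. Let $\tau$ be the transposition of $a$ and $b$ fixing every other vertex. Because $a$ and $b$ are twins, $N(a)\setminus\{b\}=N(b)\setminus\{a\}$, and a routine check on the edges incident to $a$ or $b$ shows that $\tau\in\Gamma(G)$; since $\tau$ fixes $F$ pointwise and $\tau\ne id$, this contradicts the triviality of $\Gamma_F(G)$. Hence $|F\cap T|\ge |T|-1$.

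Next I would feed this into a largest fixatic partition $\Pi=\{F_1,\dots,F_k\}$ of $V(G)$ with $k=F_{xt}(G)$. The classes are pairwise disjoint and $T\subseteq\bigcup_i F_i$, so $\sum_{i=1}^k |F_i\cap T|=|T|$, while each $|F_i\cap T|\ge |T|-1$. Therefore $k(|T|-1)\le |T|$, i.e. $k\le 1+\frac{1}{|T|-1}\le \frac32$ because $|T|\ge 3$; as $k$ is a positive integer, $F_{xt}(G)=1$. (One may also phrase this via Lemma \ref{mmm}: any two vertices of $T$ are interchanged by an automorphism, so $F_{xt}(G)\ge 2$ would force all $|T|\ge 3$ vertices of $T$ into pairwise distinct classes, leaving a class meeting $T$ in a single vertex, which contradicts the previous step.)

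It remains to note that $F_{xt}(G-u)$ is well defined and at least $1$. Deleting a vertex that has a twin cannot disconnect $G$, since any path through $u$ can be rerouted through a twin of $u$; and $T\setminus\{u\}$ is a twin set of $G-u$ of size $\ge 2$, so $G-u$ is again connected and symmetric. Thus $V(G-u)$ is a fixing set of $G-u$ and $F_{xt}(G-u)\ge 1=F_{xt}(G)$. The only steps carrying real weight are the twin-set bound $|F\cap T|\ge|T|-1$ and the verification that $G-u$ stays connected and symmetric; the remainder is bookkeeping, and I expect no serious obstacle.
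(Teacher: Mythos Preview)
Your argument is correct and in fact proves something sharper than the paper states: whenever $G$ has a twin set of size at least $3$, you force $F_{xt}(G)=1$ outright, after which the inequality is trivial. The pigeonhole step --- each fixatic class must meet $T$ in at least $|T|-1$ vertices, so $k(|T|-1)\le |T|$ --- is clean and watertight, and your verification that $G-u$ remains connected and symmetric is the right way to ensure $F_{xt}(G-u)\ge 1$ is even defined.

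The paper takes a different route. It argues that $fix(G-u)<fix(G)$ (deleting a twin from a twin set of size $\ge 3$ strictly lowers the fixing number), and then invokes the upper bound $F_{xt}(H)\le \lfloor |V(H)|/fix(H)\rfloor$ from Lemma~\ref{lemf1}. As written, that step only compares \emph{upper bounds} for $F_{xt}(G)$ and $F_{xt}(G-u)$, which does not by itself yield $F_{xt}(G)\le F_{xt}(G-u)$; so your direct argument is actually on firmer ground. What the paper's approach buys is a template that generalises to the subsequent Theorem (removing any $B\subseteq T$ with $|B|\le |T|-2$), though your observation that $F_{xt}(G)=1$ already handles that extension just as easily, since $G-B$ still contains a pair of twins.
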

\begin{proof}Suppose that, $u\in T$. Then $G-u$ is an induced subgraph of $G$.
Since $|T|\geq 3$, so $fix(G-u)< fix(G)$. Since $fix(G)< n$, so
$\frac{n}{fix(G-u)}> \frac{n}{fix(G)}$. Thus, by Lemma \ref{lemf1},
$F_{xt}(G)\leq F_{xt}(G-u)$, for all $u\in T$.
\end{proof}

Following this Proposition, we establish the following theorem:

\begin{Theorem} Let $T$ be a twin set of a connected graph $G$
with $|T|\geq 3$, then $F_{xt}(G)\leq F_{xt}(G-B)$ for every subset
$B\subseteq T$ with $|B|\leq|T|-2$.
\end{Theorem}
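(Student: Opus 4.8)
The plan is to bootstrap from the preceding Proposition by an induction on $|B|$, using transitivity of the twin relation to keep a twin set of size at least $3$ at every stage. First I would set up notation: let $B = \{u_1, u_2, \ldots, u_m\} \subseteq T$ with $m = |B| \leq |T| - 2$, and define the chain of graphs $G = G_0 \supseteq G_1 \supseteq \cdots \supseteq G_m = G - B$ by $G_j = G - \{u_1, \ldots, u_j\}$. The base case $j=0$ is trivial. For the inductive step, I would show $F_{xt}(G_{j-1}) \leq F_{xt}(G_j)$ by applying the preceding Proposition to $G_{j-1}$ with the vertex $u_j$.

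To apply that Proposition I must verify its hypothesis for $G_{j-1}$: namely that $u_j$ lies in a twin set of $G_{j-1}$ of cardinality at least $3$. The key observation is that $T \setminus \{u_1, \ldots, u_{j-1}\}$ is still a twin set in $G_{j-1}$ — deleting vertices from a twin set preserves the twin relation among the survivors, since for twins $x,y$ the condition $N(x)\setminus\{y\} = N(y)\setminus\{x\}$ is inherited by any induced subgraph containing both $x$ and $y$ (we just remove the deleted vertices from both neighborhoods). Since $|T| \geq m + 2 \geq j + 2$, we have $|T \setminus \{u_1,\ldots,u_{j-1}\}| \geq |T| - (j-1) \geq m + 2 - (j-1) \geq 3$ (using $j \leq m$), and this surviving twin set contains $u_j$. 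Hence the Proposition applies and gives $F_{xt}(G_{j-1}) \leq F_{xt}(G_{j-1} - u_j) = F_{xt}(G_j)$.

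Chaining the inequalities $F_{xt}(G) = F_{xt}(G_0) \leq F_{xt}(G_1) \leq \cdots \leq F_{xt}(G_m) = F_{xt}(G-B)$ completes the argument. The main obstacle, and the point deserving the most care, is the bookkeeping that guarantees the surviving twin set never drops below size $3$: one must track that after removing $j-1$ vertices from $T$ (where $j$ ranges up to $m \leq |T|-2$), at least $|T| - (j-1) \geq 3$ elements remain, so there is always a legitimate twin set of size $\geq 3$ through which the previous Proposition can be invoked. A secondary, routine point is confirming that each $G_j$ remains connected and symmetric so that the earlier results remain applicable; this follows because $G_j$ still contains a twin set of size $\geq 3$, forcing a nontrivial automorphism, and connectivity is preserved since a vertex with a twin of degree $\geq 2$ (or in the pendant case, together with its common neighbor) never disconnects the graph.
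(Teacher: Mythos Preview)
Your proposal is correct and is exactly the approach the paper intends: the paper offers no explicit proof of this theorem, merely stating that it follows from the preceding Proposition, and your induction on $|B|$ (removing one twin at a time while tracking that the surviving twin set stays of size at least $3$) is precisely the argument that fleshes this out. Your attention to the bookkeeping on twin-set size and to connectivity/symmetry of the intermediate graphs $G_j$ in fact supplies details the paper leaves entirely implicit.
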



\begin{Lemma}\label{ttt} Let $G$ be a connected graph of order $n\geq 2$,
then $n\leq fix(G)+F_{xt}(G)\leq n+1.$ Both bounds are sharp.
\end{Lemma}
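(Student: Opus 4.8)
Write $f=fix(G)$ and $k=F_{xt}(G)$; since $G$ is connected and symmetric, $1\le f\le n-1$. I would handle the two inequalities separately.

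The upper bound $f+k\le n+1$ should follow quickly from Lemma~\ref{lemf1}. From that lemma $k\le\lfloor n/f\rfloor\le n/f$, so it suffices to verify the numerical inequality $f+n/f\le n+1$ for every integer $f$ with $1\le f\le n-1$. Multiplying by $f>0$ turns this into $f^{2}+n\le f(n+1)$, i.e.\ into $(f-1)(f-n)\le 0$, which is immediate since $f-1\ge 0$ and $f-n<0$. For sharpness I would seek a graph making both estimates tight: when $f=1$ the factor $f-1$ vanishes, and if in addition every vertex stabilizer is trivial then $k=n$ by the earlier characterization of $F_{xt}(G)=n$, so that $f+k=n+1$; $K_{2}$ and $P_{4}$ are such graphs. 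For the other extreme, $K_{n}$ with $n\ge 3$ gives $f=n-1$ and $k=1$, hence $f+k=n$, which is what I would use for sharpness of the lower bound.

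The lower bound $f+k\ge n$, equivalently the claim that $V(G)$ admits a fixatic partition into at least $n-f$ classes, is the substantive part, and my plan is constructive. Fix a minimum fixing set $F$ with $|F|=f$ and take it as one fixatic class; this uses one class and leaves $n-f$ vertices, from which I would like to extract $n-f-1$ further classes, i.e.\ one class of size two together with singletons. Since $\{v\}$ is a fixing set precisely when $\Gamma_{v}(G)=\{id\}$, this succeeds at once whenever at most two vertices outside $F$ have nontrivial stabilizer. In general I would (a) choose $F$ so as to absorb as many vertices with nontrivial stabilizer as possible, and (b) merge the remaining such vertices into as few enlarged fixing classes as possible, trying to bound the number of classes lost in (b) against the slack $n-kf\ge 0$ provided by Lemma~\ref{lemf1}.

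The crux, and the step I expect to be the genuine obstacle, is (b): controlling, purely in terms of $fix(G)$, how many classes one is forced to coalesce because their singletons fail to be fixing sets. This is the only non-routine point, and carrying it out would seem to need a structural link between the family of nontrivial stabilizers $\Gamma_{v}(G)$ — equivalently the interchange relation of Lemma~\ref{mmm} — and the value of $fix(G)$. Before committing to this route I would stress-test it on dense, highly symmetric graphs (for example $C_{6}$, where $f=2$ and every $2$-subset other than an antipodal pair is a fixing set) to make sure the class count cannot drop below $n-f$; otherwise the statement would require an additional hypothesis.
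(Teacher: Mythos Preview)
Your upper-bound argument is correct and in fact more complete than the paper's. The paper writes only ``Whenever $fix(G)=1$, then $F_{xt}(G)\le n$, and hence $fix(G)+F_{xt}(G)\le n+1$,'' which literally treats the single case $f=1$; your inequality $(f-1)(f-n)\le 0$, obtained from Lemma~\ref{lemf1}, disposes of all $1\le f\le n-1$ at once. The sharpness examples agree (even paths for the upper bound, $K_{n}$ for the lower).

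For the lower bound, however, the gap you flagged is real and cannot be closed: the inequality $fix(G)+F_{xt}(G)\ge n$ is false as stated. Your instinct to stress-test it was exactly right. Already $G=C_{5}$ is a counterexample: $fix(C_{5})=2$, and by the paper's own proposition on odd cycles (the one following Theorem~\ref{thh}) one has $F_{xt}(C_{5})=\lfloor 5/2\rfloor=2$, so $fix(C_{5})+F_{xt}(C_{5})=4<5=n$. Your suggested test case $C_{6}$ fails too, with $2+3=5<6$. The constructive scheme you outlined therefore cannot be repaired, since no fixatic partition of $V(C_{5})$ into three classes exists. The paper's argument for the lower bound reads ``Take $fix(G)=n-1$, then $n\le fix(G)+F_{xt}(G)$''; this checks only the extreme value $f=n-1$ and then asserts the general inequality without justification. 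So the obstacle you anticipated is not a missing idea in your sketch but a defect in the statement itself, which would need an additional hypothesis to hold.
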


\begin{proof} Whenever $fix(G)=1$, then $F_{xt}(G)\leq n$, and hence $fix(G)+F_{xt}(G)\leq n+1$.
Note that $F_{xt}(G)=1$ whenever $\lfloor \frac{n}{2}\rfloor <
fix(G)\leq n-1$. Take $fix(G)=n-1$, then $n\leq fix(G)+ F_{xt}(G)$. Hence, $n\leq fix(G)+F_{xt}(G)\leq n+1.$\\
The upper bound is sharp whenever $G=P_{2k}, k\geq 1$ and the lower
bound is sharp whenever $G=K_{n}, n\geq 3$. Hence, $n\leq
fix(G)+F_{xt}(G)\leq n+1.$
\end{proof}

A \emph{fixing vertex} is a vertex which forms a fixing set of a
connected graph $G$.\\Let $G$ be a connected graph and $y\in V(G)$
be a fixing vertex. Now, if $G-y$ is a connected symmetric graph,
then $fix(G)= fix(G-y)$, and hence we have the following result:

\begin{Proposition} Let $G$ be a connected graph of order $n\geq 3$,
and $y\in V(G)$ a fixing vertex. Let $G-y$ be a connected symmetric
graph, then $F_{xt}(G)\geq F_{xt}(G-y)$. This bound is sharp.
\end{Proposition}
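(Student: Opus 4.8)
The plan is to bracket both $F_{xt}(G)$ and $F_{xt}(G-y)$ by the two estimates already available and let them meet at $n-1$. The one genuinely new ingredient is the value of the fixing number: since $y$ is a fixing vertex, $\{y\}$ is a fixing set of $G$, so $fix(G)\le 1$, while $G$ is symmetric, so $fix(G)\ge 1$; hence $fix(G)=1$, and therefore also $fix(G-y)=fix(G)=1$ as recorded just before the statement (this is where the hypothesis that $G-y$ is connected and symmetric enters).

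Next I would apply the upper bound of Lemma~\ref{lemf1} to the connected graph $G-y$ of order $n-1$, obtaining $F_{xt}(G-y)\le \lfloor (n-1)/fix(G-y)\rfloor = n-1$, and the lower bound of Lemma~\ref{ttt} to $G$, obtaining $n\le fix(G)+F_{xt}(G)=1+F_{xt}(G)$, i.e.\ $F_{xt}(G)\ge n-1$. Chaining the two gives $F_{xt}(G)\ge n-1\ge F_{xt}(G-y)$, which is the asserted bound. For sharpness one only needs a single graph realizing equality: a short odd path works, e.g.\ $G=P_5$ with $y$ an end vertex. Then $y$ is a fixing vertex, $G-y\cong P_4$ is connected and symmetric, and a direct count of fixatic partitions shows $F_{xt}(P_5)=F_{xt}(P_4)$, since the midpoint of $P_5$ is not itself a fixing set (which caps $F_{xt}(P_5)$) while every vertex of $P_4$ is a fixing set.

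The step I expect to carry all the weight is the identity $fix(G-y)=fix(G)$, i.e.\ that deleting a fixing vertex from a graph of fixing number $1$ leaves the fixing number equal to $1$. Because an automorphism of $G-y$ need not extend to $G$ and an automorphism of $G$ need not restrict to $G-y$, proving this really requires directly comparing $\Gamma(G)$ and $\Gamma(G-y)$, so that is where the care lies; once it is available, the rest is routine bookkeeping with Lemma~\ref{lemf1} and Lemma~\ref{ttt}, the only remaining subtlety being to pick the sharpness example so that both fixatic numbers land on $n-1$ rather than $F_{xt}(G)$ coming out strictly larger.
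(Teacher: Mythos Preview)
The paper does not actually supply a proof of this Proposition; it only records the observation $fix(G)=fix(G-y)$ immediately before the statement and leaves the rest implicit. So there is no argument in the paper to compare yours against, and the question reduces to whether your chain of inequalities is sound.

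It is not, and the break is precisely where you invoke the lower bound of Lemma~\ref{ttt} to obtain $F_{xt}(G)\ge n-1$. That bound does not hold for arbitrary graphs with $fix(G)=1$; the paper's own proof of Lemma~\ref{ttt} only checks the extreme cases $fix(G)=1$ (for the upper bound) and $fix(G)=n-1$ (for the lower bound) rather than establishing the full range. Concretely, let $G$ be the tree on five vertices with edge set $\{ab,bc,cd,ce\}$, so that $c$ has degree~$3$ and $d,e$ are twin leaves at $c$. Then $\Gamma(G)=\{\mathrm{id},(d\ e)\}$, the only fixing vertices are $d$ and $e$, and every fixing set must contain at least one of them; hence $F_{xt}(G)=2$, not $n-1=4$, and $fix(G)+F_{xt}(G)=3<5=n$.

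The same tree shows that the Proposition itself is false as stated. Taking $y=d$, the graph $G-y$ is the path $a\text{--}b\text{--}c\text{--}e\cong P_4$, which is connected and symmetric with $F_{xt}(P_4)=4$; thus $F_{xt}(G)=2<4=F_{xt}(G-y)$. Your identification $fix(G)=fix(G-y)=1$ is correct, your upper bound $F_{xt}(G-y)\le n-1$ via Lemma~\ref{lemf1} is correct, and your sharpness example $G=P_5$ is fine; but the passage through Lemma~\ref{ttt} cannot be repaired, because the underlying inequality $F_{xt}(G)\ge n-1$ simply fails.
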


\noindent Suppose that $G_{1}=(V_{1},E_{1})$ and
$G_{2}=(V_{2},E_{2})$ be two graphs with disjoint vertex sets
$V_{1}$ and $V_{2}$ and disjoint edge sets $E_{1}$ and $E_{2}$. The
\emph{union} of $G_{1}$ and $G_{2}$ is the graph $G_{1}\cup
G_{2}=(V_{1}\cup V_{2}, E_{1}\cup E_{2})$. The \emph{join} of
$G_{1}$ and $G_{2}$ is the graph $G_{1}+G_{2}$ that consists of
$G_{1}\cup G_{2}$ and all edges joining all vertices of $V_{1}$ with
all vertices of $V_{2}$.

\begin{Theorem}\label{df} For any two connected graphs $G_{1}$ and $G_{2}$,
$fix(G_{1}+G_{2})\geq fix(G_{1})+fix(G_{2})$. This bound is sharp.
\end{Theorem}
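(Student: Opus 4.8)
The plan is to show that any fixing set $F$ of $G_1 + G_2$ can be ``split'' into a fixing set of $G_1$ together with a fixing set of $G_2$, so that $|F| \geq fix(G_1) + fix(G_2)$; taking $F$ of minimum size gives the inequality. The key structural observation is about the automorphism group of a join. Write $n_i = |V(G_i)|$. If $n_1 \neq n_2$, then every automorphism of $G_1 + G_2$ maps $V(G_1)$ to $V(G_1)$ and $V(G_2)$ to $V(G_2)$, because vertices of $V(G_1)$ have degree $\geq n_2$ in $G_1 + G_2$ while vertices of $V(G_2)$ have degree $\geq n_1$, and more precisely the partition $\{V(G_1), V(G_2)\}$ is preserved by the join structure. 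In that ``generic'' case, $\Gamma(G_1 + G_2)$ is essentially $\Gamma(G_1) \times \Gamma(G_2)$: an automorphism restricts to an automorphism of each $G_i$ (the edges inside $V(G_i)$ are exactly the non-join edges among vertices of degree-type $i$), and conversely any pair of automorphisms of $G_1$ and $G_2$ glues to one of $G_1 + G_2$ since the join adds all cross edges.

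Granting this, the argument runs as follows. Let $F$ be a minimum fixing set of $G_1 + G_2$, and set $F_i = F \cap V(G_i)$ for $i = 1, 2$. I claim $F_i$ is a fixing set of $G_i$. Suppose $\sigma \in \Gamma(G_i)$ fixes every vertex of $F_i$; extend $\sigma$ to $\hat\sigma \in \Gamma(G_1 + G_2)$ by letting it act as the identity on $V(G_{3-i})$ — this is an automorphism by the product structure above. Then $\hat\sigma$ fixes every vertex of $F = F_1 \cup F_2$, so since $F$ is a fixing set, $\hat\sigma = id$, hence $\sigma = id$. Thus $F_i$ fixes $G_i$, giving $|F_i| \geq fix(G_i)$, and therefore $fix(G_1 + G_2) = |F| = |F_1| + |F_2| \geq fix(G_1) + fix(G_2)$. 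For sharpness one should exhibit a family where equality holds; a natural candidate is a pair of paths $P_m + P_\ell$ with $m \neq \ell$, or more simply $G_1 + G_2$ where each $G_i$ is itself rigid after the join forces the block structure — one checks directly that a minimum fixing set there decomposes with no ``savings.''

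The main obstacle is the degenerate case $n_1 = n_2$, where an automorphism of $G_1 + G_2$ could potentially swap $V(G_1)$ and $V(G_2)$ (this happens precisely when $G_1 \cong G_2$, since such a swap must be a graph isomorphism preserving the cross-edges, which are all present either way). If $n_1 = n_2$ but $G_1 \not\cong G_2$, no swap is possible and the argument above goes through verbatim. If $G_1 \cong G_2$, then $\Gamma(G_1 + G_2)$ contains, besides the product $\Gamma(G_1) \times \Gamma(G_2)$, a coset of order-reversing automorphisms; but a fixing set $F$ must in particular kill all the ``internal'' automorphisms, i.e. $F \cap V(G_i)$ must still fix $G_i$ by the same extend-by-identity trick (the extension by identity lies in the product subgroup and is unaffected by the existence of swaps), so $|F \cap V(G_i)| \geq fix(G_i)$ still holds and the bound survives. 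Hence the inequality holds in all cases; the only real care needed is articulating the join's automorphism product structure cleanly, which is where I would spend most of the write-up. I expect sharpness to be routine once the right small example is identified.
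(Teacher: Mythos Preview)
Your argument for the inequality is correct and is cleaner than the paper's. The paper proceeds by contradiction: assuming a minimum fixing set $F_3$ of $G_1+G_2$ has size less than $fix(G_1)+fix(G_2)$, it removes a vertex and argues case by case (according to whether the removed vertex lies in a minimum fixing set of $G_1$ or $G_2$) that the remainder cannot fix the join. Your approach is direct: using only the containment $\Gamma(G_1)\times\Gamma(G_2)\le \Gamma(G_1+G_2)$, you show that $F\cap V(G_i)$ is a fixing set of $G_i$ via the extend-by-identity trick, and sum. This is more transparent, and in fact your own write-up can be shortened: the whole discussion of whether $n_1=n_2$ or $G_1\cong G_2$ is irrelevant, since (as you eventually observe) the extension by identity always lands in $\Gamma(G_1+G_2)$ regardless of whether swap automorphisms exist. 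Only the containment is needed, never the equality.

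Your sharpness discussion, however, is too loose. The suggestion $P_m+P_\ell$ with $m\neq \ell$ does not work in general: for instance $P_2+P_3$ is $K_5$ minus one edge, whose automorphism group has order $12$, strictly larger than $\Gamma(P_2)\times\Gamma(P_3)$, and one computes $fix(P_2+P_3)=3>1+1$. The issue is that the join can create extra automorphisms mixing the two sides even when $|V(G_1)|\neq|V(G_2)|$ (think $K_r+K_s=K_{r+s}$). The paper's concrete sharpness example is $G_1=K_2$ and $G_2=C_n$ with $n\ge 4$, where a degree argument shows the two parts are preserved and $fix(K_2+C_n)=1+2=3$. You should replace your hand-wave with a specific verified example of this kind.
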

\begin{proof} Suppose that, $fix(G_{1})=n_{1}$, $fix(G_{2})=n_{2}$
and $fix(G_{1}+G_{2})=n_{3}$. Let
$F_{3}=\{v_{1},v_{2},...,v_{n_{3}}\}$ be any fixing set of
$G_{1}+G_{2}$ of minimum cardinality. Suppose to the contrary, that
$n_{3}< n_{1}+n_{2}$. Without loss of generality, take $n_{3}=
n_{1}+n_{2}-1$, that is, for any  $v_{i}\in F_{3}, 1\leq i\leq
n_3$, $F_{4}= F_{3}-\{v_{i}\}$ is a fixing set of $G_{1}+G_{2}$.\\
If $v_{i}$ belongs to any fixing set of minimum cardinality of
$G_{1}$ (or of $G_{2}$), then there exists a vertex $v_{j}\in
V(G_{1}), j\neq i$ or $(\in V(G_{2}))$ such that $v_{j}\in
\mathcal{O}(v_{i})$, and hence $F_{4}$ is not a fixing set of
$G_{1}+G_{2}$.\\ If $v_{i}$ does not belong to any fixing set of
minimum cardinality of $G_{1}$ and of $G_{2}$, then $v_{i}\in
V(G_{1}+G_{2})$ is a saturated vertex. Now, if $v_{i}\in V(G_{1})$,
then there must exists a saturated vertex $v_j\in V(G_2)$, such that
$\Gamma_{\{v_i, v_j\}}(G_1+G_2)$ is non trivial. Thus, $F_4$ is not
a fixing set of $G_1+G_2$, which is contradiction. Hence,
$fix(G_{1}+G_{2})> fix(G_{1})+fix(G_{2})$. Sharpness of this bound
follows, when we take, $G_{1}= K_{2}$ and $G_{2}= C_{n}, n\geq 4$.
Hence, $fix(G_{1}+G_{2})\geq fix(G_{1})+fix(G_{2})$.
\end{proof}
Now, we establish the sharp upper bound of a join graph by using
Theorem \ref{df}.
\begin{Theorem} Let $G_{1}$ and $G_{2}$ be the connected graphs of
order $n_{1}, n_{2}\geq 2$, respectively and $G=G_{1}+G_{2}$ be the
join graph of order $n_{3}\geq 5$, then $F_{xt}(G)\leq
min\{F_{xt}(G_{1}), F_{xt}(G_{2})\}.$ This bound is sharp.
\end{Theorem}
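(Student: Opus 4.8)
The plan is to combine the lower bound $fix(G_1+G_2)\ge fix(G_1)+fix(G_2)$ from Theorem~\ref{df} with the upper bound $F_{xt}(G)\le\lfloor n/fix(G)\rfloor$ from Lemma~\ref{lemf1}, and to compare the resulting quantity against $F_{xt}(G_i)$ separately for $i=1,2$. Write $n_1=|V(G_1)|$, $n_2=|V(G_2)|$, $n_3=n_1+n_2$, and $f_i=fix(G_i)$. By Theorem~\ref{df}, $fix(G)\ge f_1+f_2$, so Lemma~\ref{lemf1} gives
\[
F_{xt}(G)\ \le\ \Big\lfloor\frac{n_3}{fix(G)}\Big\rfloor\ \le\ \Big\lfloor\frac{n_1+n_2}{f_1+f_2}\Big\rfloor .
\]
Again by Lemma~\ref{lemf1} applied to each factor, $F_{xt}(G_i)\le\lfloor n_i/f_i\rfloor$, but that inequality points the wrong way for a direct comparison; instead I would argue by a mediant-type estimate. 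Indeed, for positive reals one has $\frac{n_1+n_2}{f_1+f_2}\le\max\{\frac{n_1}{f_1},\frac{n_2}{f_2}\}$, which is the wrong direction, so the real work is to show $\frac{n_1+n_2}{f_1+f_2}\le\min\{\frac{n_1}{f_1},\frac{n_2}{f_2}\}$ fails in general and must be replaced by a structural argument on fixatic partitions themselves.

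Consequently I would abandon the purely arithmetic route and argue directly. Let $\Pi=\{F_1,\dots,F_k\}$ be a fixatic partition of $G=G_1+G_2$ with $k=F_{xt}(G)$ classes. The key claim is: for each class $F_j$, the set $F_j\cap V(G_1)$ is a fixing set of $G_1$, and likewise $F_j\cap V(G_2)$ is a fixing set of $G_2$. Granting this claim, $\{F_j\cap V(G_1)\}_{j=1}^{k}$ is an ordered partition of $V(G_1)$ into $k$ fixing sets of $G_1$ (discarding any empty blocks only decreases the count, but one shows no block can be empty since an empty set is never a fixing set of a symmetric graph), hence $k\le F_{xt}(G_1)$; symmetrically $k\le F_{xt}(G_2)$, giving $F_{xt}(G)\le\min\{F_{xt}(G_1),F_{xt}(G_2)\}$.

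The heart of the matter — and the step I expect to be the main obstacle — is the claim that $F_j\cap V(G_1)$ fixes $G_1$. This requires understanding $\Gamma(G_1+G_2)$: when $n_3\ge 5$ and $G_1,G_2$ are connected, every automorphism of $G_1+G_2$ either preserves the partition $\{V(G_1),V(G_2)\}$ or swaps the two sides, and the latter can only happen if $G_1\cong G_2$; in the preserving case, $\Gamma(G_1+G_2)$ restricts to $\Gamma(G_1)\times\Gamma(G_2)$ (up to the possible extra "diagonal" automorphisms coming from common structure, which the order hypothesis $n_3\ge 5$ together with connectedness is meant to rule out, paralleling the saturated-vertex analysis in Theorem~\ref{df}). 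So if $\alpha\in\Gamma(G_1)$ is nontrivial and fixes $F_j\cap V(G_1)$ pointwise, then extending $\alpha$ by the identity on $V(G_2)$ yields a nontrivial element of $\Gamma_{F_j}(G_1+G_2)$, contradicting that $F_j$ is a fixing set of $G$. Thus the plan is: (i) prove the structural description of $\Gamma(G_1+G_2)$ under $n_3\ge 5$; (ii) deduce the restriction claim; (iii) assemble the partition argument above; (iv) for sharpness, exhibit graphs $G_1,G_2$ with $F_{xt}(G_1+G_2)=\min\{F_{xt}(G_1),F_{xt}(G_2)\}$, e.g.\ taking $G_1=G_2=K_m$ so that both sides and the join have fixatic number $1$.
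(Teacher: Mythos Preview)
Your second, structural approach---restricting a fixatic partition $\{F_j\}$ of $G$ to $V(G_1)$ and to $V(G_2)$---is exactly the route the paper takes. The paper's own proof is terse almost to the point of being a sketch, but the underlying idea is the same: each class of a fixatic partition of $G$ must meet $V(G_1)$ in a fixing set of $G_1$, so the number of classes is at most $F_{xt}(G_1)$.

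Two remarks on your write-up. First, your planned step~(i) is far more than you need, and in fact you already gave the complete argument in the paragraph just before your plan: any $\alpha\in\Gamma(G_1)$ extends to an automorphism of $G_1+G_2$ by acting as the identity on $V(G_2)$, and this embedding $\Gamma(G_1)\hookrightarrow\Gamma(G_1+G_2)$ is immediate from the definition of the join, with no hypothesis on $n_3$ required. The full structure of $\Gamma(G_1+G_2)$, the swap case when $G_1\cong G_2$, and the condition $n_3\ge 5$ are all irrelevant to the inequality; only the easy inclusion is used. (The same embedding also disposes of the empty-block worry: if $F_j\cap V(G_1)=\emptyset$ then any nontrivial $\alpha\in\Gamma(G_1)$, extended by the identity, fixes $F_j$ pointwise.) Second, the paper's sharpness example is $G_1=K_2$, $G_2=P_{2n}$ with $n\ge 2$, rather than your $G_1=G_2=K_m$; either works.
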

\begin{proof} Suppose that, $k= F_{xt}(G)> F_{xt}(G_{1})$, where
$F_{xt}(G_{1})=min\{F_{xt}(G_{1}),F_{xt}(G_{2})\}.$ Then there will
$k$ classes consisting of fixing sets of $G$, that is, which will
fix vertices of $G_{1}$ as well as $G_{2}$. This means vertices of
$G_{1}$ will be partitioned into more classes than $F_{xt}(G)$ which
will $fix(G)$ but this is contradiction with the definition of
$F_{xt}(G)$. For the sharpness of this bound, take $G_{1}= K_{2}$
and $G_{2}= P_{2n}, n\geq 2$. Hence, $F_{xt}(G)\leq
min\{F_{xt}(G_{1}), F_{xt}(G_{2})\}.$
\end{proof}

\begin{Proposition} Let $G_{1} = K_{1}$ and $G_{2}$ be any connected graph and $G=G_{1}+G_{2}$
with $|G_{2}|\geq 2$, then $F_{xt}(G)\leq F_{xt}(G_{2})$. This bound
is sharp.
\end{Proposition}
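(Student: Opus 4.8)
The plan is to exploit the cone structure of $G=K_{1}+G_{2}$. Let $x$ denote the vertex of $K_{1}$; then $x$ is a saturated vertex of $G$, $V(G)=\{x\}\cup V(G_{2})$, and the edges of $G$ lying inside $V(G_{2})$ are exactly the edges of $G_{2}$. The first step is to record how $\Gamma(G)$ sees $\Gamma(G_{2})$. An automorphism of $G$ fixing $x$ must permute $N(x)=V(G_{2})$ and preserve adjacencies inside $V(G_{2})$, hence restricts to an automorphism of $G_{2}$; conversely every $\beta\in\Gamma(G_{2})$ extends to an automorphism $\hat\beta$ of $G$ fixing $x$, since each edge $xv$ is trivially preserved. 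Thus restriction is a group isomorphism $r\colon\Gamma_{x}(G)\to\Gamma(G_{2})$ with inverse $\beta\mapsto\hat\beta$. From this I would isolate two consequences: (i) for any $F\subseteq V(G_{2})$, the map $\beta\mapsto\hat\beta$ embeds $\Gamma_{F}(G_{2})$ into $\Gamma_{F}(G)$, so if $\Gamma_{F}(G)$ is trivial then so is $\Gamma_{F}(G_{2})$, i.e.\ $F$ is a fixing set of $G_{2}$; and (ii) for any $F$ with $x\in F$, the isomorphism $r$ carries $\Gamma_{F}(G)$ onto $\Gamma_{F\setminus\{x\}}(G_{2})$, so if $\Gamma_{F}(G)$ is trivial then $F\setminus\{x\}$ is a fixing set of $G_{2}$.

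Next I would take a fixatic partition $\Pi=\{F_{1},\dots,F_{k}\}$ of $V(G)$ with $k=F_{xt}(G)$ and transfer it to $G_{2}$. Say $x\in F_{1}$. For each $i\ge 2$ we have $F_{i}\subseteq V(G_{2})$ with $\Gamma_{F_{i}}(G)$ trivial, so by (i) $F_{i}$ is a fixing set of $G_{2}$; and by (ii), $F_{1}\setminus\{x\}$ is a fixing set of $G_{2}$. Since $G_{2}$ is symmetric, $\Gamma_{x}(G)\cong\Gamma(G_{2})\neq\{id\}$, so $\{x\}$ is not a fixing set of $G$; hence the fixing set $F_{1}$ containing $x$ satisfies $|F_{1}|\ge 2$ and $F_{1}\setminus\{x\}\neq\emptyset$. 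Therefore $\Pi'=\{F_{1}\setminus\{x\},F_{2},\dots,F_{k}\}$ is a partition of $V(G_{2})$ into $k$ nonempty classes, each a fixing set of $G_{2}$, which gives $F_{xt}(G_{2})\ge k=F_{xt}(G)$.

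The only step needing genuine care is the bookkeeping around $x$: one must be sure that deleting $x$ from its class leaves a \emph{nonempty} set that is still a fixing set of $G_{2}$, and this is exactly where the standing hypothesis that $G_{2}$ is symmetric is used; this is also why the statement is not subsumed by the earlier join bound $F_{xt}(G_{1}+G_{2})\le\min\{F_{xt}(G_{1}),F_{xt}(G_{2})\}$, whose proof distributes the vertices of $G_{1}$ among several classes and so breaks when $|G_{1}|=1$. For the sharpness I would take $G_{1}=K_{1}$ and $G_{2}=P_{2n}$ with $n\ge 2$: then $x$ is the unique saturated vertex of $G=K_{1}+P_{2n}$, $\Gamma(G)\cong\Gamma(P_{2n})\cong\mathbb{Z}_{2}$ is generated by the path-reflection extended by fixing $x$, and this reflection fixes no vertex of $P_{2n}$; hence every vertex of $P_{2n}$ forms a fixing set of $G$, so that $F_{xt}(G)=2n=F_{xt}(P_{2n})$.
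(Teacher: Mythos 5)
Your argument is correct, and it is worth noting that the paper states this proposition \emph{without} any proof (it appears immediately before Section~3, presumably left as an analogue of the preceding join theorem, whose proof distributes vertices of $G_{1}$ into classes and genuinely needs $|G_{1}|\geq 2$). So there is no paper proof to match; your route is a clean, self-contained replacement. The key device, the restriction isomorphism $r\colon\Gamma_{x}(G)\to\Gamma(G_{2})$ for the apex $x$ of the cone $G=K_{1}+G_{2}$, is used correctly: the extension map shows that any fixatic class $F_{i}\subseteq V(G_{2})$ of $G$ is a fixing set of $G_{2}$, and $r$ shows that the class containing $x$ remains a fixing set of $G_{2}$ after deleting $x$. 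Your attention to the nonemptiness of $F_{1}\setminus\{x\}$ is not mere bookkeeping: if $G_{2}$ were rigid with no saturated vertex, then $G$ would be rigid and $F_{xt}(G)=|G_{2}|+1>F_{xt}(G_{2})$, so the statement really does rely on the paper's standing convention that the graphs considered are symmetric, exactly as you flag. The sharpness example $G_{2}=P_{2n}$, $n\geq 2$, also checks out: $x$ is the unique saturated vertex, $\Gamma(G)\cong\mathbb{Z}_{2}$ is generated by the extended reflection which fixes only $x$, so every path vertex is a fixing vertex of $G$ and $F_{xt}(G)=2n=F_{xt}(P_{2n})$ (the value $2n+1$ is excluded since $\{x\}$ is not a fixing set). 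In short: correct, complete, and more rigorous than anything the paper offers for this statement.
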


\section{Some Realizable results}
In the following lemma, we show that every integer $t\geq 2$ is
realizable as the fixing number and the fixatic number of some
connected graphs:

\begin{Lemma} For any integer $t\geq 2$, there exists a
connected graph $G$ such that $fix(G)=t=F_{xt}(G)$.
\end{Lemma}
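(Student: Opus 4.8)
The plan is to construct, for each $t\ge 2$, an explicit connected graph $G$ whose fixing number equals $t$ and whose vertex set splits into exactly $t$ disjoint fixing sets. A natural candidate is to take $t$ copies of a small rigid-after-one-fix gadget and glue them at a common vertex, or more simply to attach $t$ pendant ``twin bundles'' to a central structure. Concretely, I would try $G$ built from a central vertex $c$ to which we attach $t$ pairwise non-adjacent paths (or stars) arranged so that (i) each of the $t$ branches contains exactly one pair (or exactly the right number) of symmetric vertices that must be hit to kill the automorphism group, forcing $fix(G)=t$, and (ii) the vertices can be distributed among $t$ classes, one ``breaking vertex'' from each branch landing in each class, so that every class is itself a fixing set. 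The design constraint is that the automorphism group should be (a power of) $\mathbb{Z}_2$ acting independently on the $t$ branches, so $fix(G)=t$, while each branch individually offers at least $t$ candidate vertices whose selection breaks that branch's symmetry.

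First I would pin down the gadget. Let $H$ be a path $P_3$ with its center identified to $c$; then the two leaves of each such $P_3$ are twins, interchanged by an automorphism, and choosing either leaf fixes that branch. With $t$ such branches attached at $c$, $\Gamma(G)\cong (\mathbb{Z}_2)^t$ and Lemma~\ref{mmm} (iterated) gives $fix(G)=t$: you must put at least one vertex into the ``choice'' from each of the $t$ interchangeable pairs. Next I would check that this $G$ is not accidentally too symmetric — the center $c$ is fixed by everything, the $t$ branches are mutually distinguishable only if I make the branches of pairwise different ``flavors'' (e.g.\ branch $i$ is a path of length $i+1$), which ensures no automorphism permutes branches and keeps $\Gamma(G)=(\mathbb Z_2)^t$ exactly. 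Then the order is $n = 1 + \sum_{i=1}^{t}(\text{branch size})$, which I will need large enough that the counting below works; using branches of distinct sizes $\ge 2$ gives plenty of room.

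The core verification is that $F_{xt}(G)=t$. For the lower bound $F_{xt}(G)\ge t$: each branch $i$ contributes at least two vertices that, together with all fixed vertices, form a fixing set for that branch's $\mathbb Z_2$; by choosing the branches large enough (distinct lengths $\ge t+1$ say), each branch has $\ge t$ vertices in the ``non-trivially-moved'' part, so I can construct $t$ pairwise disjoint sets $F_1,\dots,F_t$, each picking up one symmetry-breaking vertex from every branch, hence each $F_j$ meets the moved-orbit of each branch and is therefore a fixing set; the leftover (fixed) vertices can be distributed arbitrarily among the $F_j$. For the upper bound $F_{xt}(G)\le t$: by Lemma~\ref{lemf1}, $F_{xt}(G)\le \lfloor n/fix(G)\rfloor = \lfloor n/t\rfloor$, which is not tight enough by itself, so instead I would argue directly that in branch $1$ (the $P_3$-branch, the ``tightest'' one) there are exactly two vertices whose removal from a set could destroy its fixing property — actually the cleanest route is to make the minimal branch a single $P_3$ so that branch contributes an interchangeable pair of leaves $\{a,b\}$, and any fixing set must contain $a$ or $b$; since only two vertices $a,b$ can serve this role and a fixatic partition needs each class to contain one of them, there are at most $2$ classes — which would force $t=2$.

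That last point is the main obstacle: a single $P_3$-branch caps the fixatic number at $2$, so to realize general $t$ I must make \emph{every} branch contribute a larger ``essential'' structure, balancing ``enough symmetry-breaking candidates for $t$ classes'' against ``not so many that the fixatic number exceeds $t$.'' The fix is to use as branch $i$ a graph whose automorphism group is $\mathbb Z_2$ but whose \emph{smallest} fixing set (within the branch, relative to $c$) has exactly one vertex while the set of vertices that can appear as the unique breaker has size exactly $t$ — for instance a branch that is a path of length exactly $t$ hanging off $c$ with an extra pendant making its two ``arms'' a symmetric pair of length-$t$ sub-paths, so that exactly the $2\cdot? $ mirror-paired vertices can break it and a careful count shows $\min$ over branches of (number of disjoint breakers) $= t$. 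I would therefore tune the common breaking structure so that each branch admits exactly $t$ pairwise-disjoint singleton breakers and no more, invoke the $\min$-type bound analogous to the join theorem proved above to get $F_{xt}(G)\le t$, combine with the construction above for $\ge t$, and conclude $fix(G)=t=F_{xt}(G)$; I expect the delicate part to be writing the branch explicitly and verifying its automorphism group and its exact stock of disjoint breaking vertices.
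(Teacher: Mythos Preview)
Your architecture is right---glue branches at a centre so that $\Gamma(G)$ is a direct product and $fix(G)$ counts the factors---but the gadget you settle on cannot realise odd $t$. You ask each branch to carry a $\mathbb{Z}_2$ symmetry and to supply exactly $t$ singleton breakers. However, the breakers for a $\mathbb{Z}_2$-factor are precisely the vertices it moves, and non-trivial $\mathbb{Z}_2$-orbits have size two; every branch therefore offers an \emph{even} number of breakers. Since, as you correctly observe, in this setup $F_{xt}(G)=\min_i(\text{breakers in branch }i)$, the fixatic number is forced to be even no matter how you tune the branch sizes. For even $t$ your scheme does work (e.g.\ let branch $i$ be a pair of paths of length $t/2+i-1$ at the centre), but for odd $t$ it is stuck, and your closing paragraph does not resolve this.

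The paper avoids the parity trap by replacing the $(\mathbb{Z}_2)^t$-structure with an $S_2\times S_t$-structure. It takes an edge $uv$, hangs two paths of order $t-1$ at $u$ and $t$ paths of order $t-1$ at $v$, giving $fix(G)=1+(t-1)=t$. The cap $F_{xt}(G)\le t$ then comes from the $v$-side: every fixing set must meet at least $t-1$ of the $t$ paths at $v$, each such path has only $t-1$ vertices, and a double count of incidences forces at most $t$ pairwise disjoint fixing sets; the matching lower bound is an explicit partition into $t$ classes. The $S_t$-block caps $F_{xt}$ at exactly $t$ for every $t$, which a product of involutions cannot do.
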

\begin{proof}Consider $G$ be a connected graph obtained by a path $P_{2}:uv$ by
joining $2$ paths $P_{t-1}^{i}:
u_{i,1}u_{i,2}...u_{i,t-1}$, $1\leq i\leq 2$ each of order $t-1$
with vertex $u$ and $t$ paths $P_{t-1}^{j}:
v_{j,1}v_{j,2}...v_{j,t-1}$, $1\leq j\leq t$ each of order $t-1$
with vertex $v$ of a path $P_{2}$ respectively. 
Let $A_{i}=\{u_{i,1}, u_{i,2},..., u_{i,t-1}\}$, $1\leq i\leq 2$ and
$B_{j}=\{v_{j,1}, v_{j,2},..., v_{j,t-1}\}$, $1\leq j\leq t$. Note
that, any minimum fixing set $F$ of $G$ must contains exactly one
vertex from either $A_{1}$ or $A_{2}$ and one vertex from all
$B_{j}$, $1\leq j\leq t$, except one. This implies that $fix(G)=t$.
We also observe that, the sets $B^{\ast}_{j}=\{v_{i,j}\}\cup \{u_{i=
1,j}\}, 1\leq j\leq t-1,\,\,1\leq i\neq j\leq t$ and
$B^{\ast}=\{v_{i,j}, u_{2,2}\}, 1\leq i=j\leq t-1$ are the maximum
number of fixing sets of minimum cardinality of $G$. Hence, each
fixatic partition $\Pi$ contains $t-1$ classes of cardinality $t$,
and one class of cardinality $2t$ because $t-2$ vertices $\{u_{i,j},
i\neq j\}$ of $P_{t-1}^{i= 2}$ and two vertices of $P_{2}$ do not
form a fixing set. It follows that $F_{xt}(G)=t$.
\end{proof}
\begin{Proposition} Let $G$ be a connected graph. If
$fix(G)=F_{xt}(G)$, then no class of any fixatic partition $\Pi$ of
$V(G)$ can be a singleton set.
\end{Proposition}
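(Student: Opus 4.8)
The plan is to argue by contradiction: suppose $fix(G) = F_{xt}(G) = t$ and some fixatic partition $\Pi = \{F_1, F_2, \ldots, F_t\}$ has a singleton class, say $F_1 = \{w\}$. Since $F_1$ is a fixing set, $w$ is a fixing vertex, so $fix(G) \leq |F_1| = 1$, which forces $t = 1$. But then $F_{xt}(G) = 1$, and a partition with $F_{xt}(G) = 1$ class consists of the single class $V(G)$, so there is no proper singleton class unless $n = 1$, contradicting the standing assumption that $G$ is symmetric (hence $|V(G)| \geq 2$). Thus no singleton class can occur.

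More carefully, I would organize the argument in two short steps. First I would recall that in any fixatic partition, every class is a fixing set, so if $F_i = \{w\}$ is a singleton then $\{w\}$ is a fixing set, whence $fix(G) \leq 1$; combined with the hypothesis $fix(G) = F_{xt}(G)$ this gives $F_{xt}(G) \leq 1$, i.e. $F_{xt}(G) = 1$ (since a connected graph of order $\geq 2$ always admits the trivial one-class partition and $F_{xt}(G) \geq 1$). Second, I would note that when $F_{xt}(G) = 1$ the unique fixatic partition is $\{V(G)\}$ (as remarked in the text right after Lemma~\ref{mmm}'s surrounding discussion, a partition with fewer than two classes contains all vertices of $G$ in one class), which is a singleton only if $|V(G)| = 1$; but $G$ is symmetric, so $|V(G)| \geq 2$, a contradiction. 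Hence every class of every fixatic partition of $V(G)$ has cardinality at least $2$.

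I do not anticipate a serious obstacle here — the statement is essentially immediate once one observes that a singleton fixing class forces $fix(G) = 1$. The only point requiring a little care is handling the degenerate case $F_{xt}(G) = 1$ correctly: one must rule out the trivial partition $\{V(G)\}$ being a singleton, which is exactly where the symmetry hypothesis (equivalently $n \geq 2$, and in fact $n \geq 2$ with a nontrivial automorphism so that $fix(G) \geq 1$) is used. An alternative phrasing avoids the case split entirely: if $F_i = \{w\}$ then $fix(G) = F_{xt}(G) \geq 2$ forces $|F_i| \geq fix(G) \geq 2$, directly contradicting $|F_i| = 1$; this is cleaner provided one first establishes $F_{xt}(G) \geq 2$, which follows since a graph with a fixing vertex $w$ has $fix(G) = 1$ and then $F_{xt}(G) = n \geq 2$ by the Proposition characterizing $F_{xt}(G) = n$. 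I would present this streamlined version as the main line of proof.
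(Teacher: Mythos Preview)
Your first two paragraphs are correct and follow exactly the paper's line: assume a singleton class $\{v\}$, observe that $v$ is then a fixing vertex so $fix(G)=1$, use the hypothesis to get $F_{xt}(G)=1$, and derive a contradiction with $G$ being symmetric (the paper phrases the final contradiction as ``$\Gamma(G)=\{e\}$'', which is really the same endpoint you reach via $|V(G)|=1$).

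However, the ``streamlined'' alternative you propose to present as the main proof has a genuine gap. You claim that once $w$ is a fixing vertex (so $fix(G)=1$), the Proposition characterizing $F_{xt}(G)=n$ yields $F_{xt}(G)=n\geq 2$. That Proposition requires the stabilizer of \emph{every} vertex to be trivial, not just that of $w$; in general $fix(G)=1$ does \emph{not} force $F_{xt}(G)=n$ (the co-rising sun example in the paper has $fix(G)=1$ but $F_{xt}(G)=6<7=n$). So the step ``$fix(G)=1\Rightarrow F_{xt}(G)=n$'' fails.

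The alternative can be repaired cheaply: once $w$ is a fixing vertex and $n\geq 2$, the pair $\{\{w\},\,V(G)\setminus\{w\}\}$ is a fixatic partition (any $(n-1)$-subset is a fixing set, as the paper notes), so $F_{xt}(G)\geq 2$; combined with $fix(G)=F_{xt}(G)$ this contradicts $fix(G)=1$. If you want the streamlined version, use this observation rather than the $F_{xt}(G)=n$ Proposition. Otherwise, simply submit your first argument, which is already complete and matches the paper.
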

\begin{proof} Contrary suppose that, a partition $\Pi_{j}$ contains a
singleton class, $F_{j}= \{v\}$, $v\in V(G)$. Note that, $v$ is a
fixing vertex, and hence $F_{xt}(G)=1$, which implies that
$\Gamma(G)=\{e\}$, a contradiction.
\end{proof}


A \emph{rooted tree} is a tree with a labeled vertex called a
\emph{root}.

\begin{Lemma}\label{dan1} For all integers $t\geq 2$, there exists a
connected graph $G$ such that $fix(G)=t$ and $F_{xt}(G)=t+1$.
\end{Lemma}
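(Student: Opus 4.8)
The plan is to exhibit a spider graph. Fix $t\ge 2$ and let $G$ be the graph with a central vertex $c$ together with $t+1$ pendant paths (``legs'') $P^{(i)}:v_{i,1}v_{i,2}\cdots v_{i,t}$ for $1\le i\le t+1$, where each $v_{i,1}$ is adjacent to $c$. Then $G$ is connected, symmetric, has $n=1+t(t+1)=t^2+t+1$ vertices, and $c$ is its unique vertex of degree $>2$.

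First I would determine $\Gamma(G)$. Since $c$ is the only vertex of degree $t+1\ge 3$, every automorphism of $G$ fixes $c$; as $G$ is a tree, an automorphism fixing both ends of a path fixes that path pointwise, so each automorphism simply permutes the $t+1$ legs of equal length and $\Gamma(G)\cong S_{t+1}$. From this I read off $fix(G)=t$: picking one vertex on each of any $t$ legs forces those legs (paths anchored at the already-fixed vertex $c$) to be fixed pointwise, hence the remaining leg is fixed as well and the stabilizer is trivial; conversely a set meeting at most $t-1$ legs leaves two equal-length legs untouched, and interchanging them (fixing everything else) is a nontrivial automorphism. With $fix(G)=t$, Lemma \ref{lemf1} yields $F_{xt}(G)\le \lfloor n/fix(G)\rfloor=\lfloor (t^2+t+1)/t\rfloor=t+1$.

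It then remains to construct a fixatic partition with $t+1$ classes. The key observation is that a set $F\subseteq V(G)$ is a fixing set exactly when $F$ meets at least $t$ of the $t+1$ legs. I would build classes $F_1,\dots,F_{t+1}$ by distributing, for each $i$, the $t$ vertices of leg $P^{(i)}$ one apiece among the $t$ classes indexed by $\{1,\dots,t+1\}\setminus\{i\}$: fix a bijection $\sigma_i:\{1,\dots,t\}\to\{1,\dots,t+1\}\setminus\{i\}$ and place $v_{i,k}$ in $F_{\sigma_i(k)}$. Then $F_j$ receives exactly one vertex from each leg $P^{(i)}$ with $i\ne j$, so $|F_j|=t$ and $F_j$ meets $t$ distinct legs; finally put the leftover vertex $c$ into $F_1$. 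Each $F_j$ is then a fixing set, the $F_j$ partition $V(G)$, and there are $t+1$ of them, so $F_{xt}(G)\ge t+1$. Combined with the upper bound, $F_{xt}(G)=t+1$ while $fix(G)=t$.

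The routine-but-delicate point, and the step I expect to need the most care, is the equivalence ``$F$ is a fixing set $\iff$ $F$ meets at least $t$ legs'': one direction rests on the remark that a single vertex on a leg, together with the automatically-fixed center $c$, pins that whole leg pointwise, and the other direction uses the leg-interchanging automorphism (this is exactly the situation of Lemma \ref{mmm}). Once this equivalence is recorded cleanly, the verification of the partition is just bookkeeping on the $(t+1)$-by-$t$ array of leg vertices.
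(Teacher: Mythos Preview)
Your proposal is correct and follows essentially the same approach as the paper: both construct the spider with center and $t+1$ legs of length $t$, argue $fix(G)=t$ via the ``meets at least $t$ legs'' criterion, and exhibit an explicit $(t+1)$-class fixatic partition by distributing each leg's $t$ vertices among $t$ of the classes (the paper uses the specific choice $\sigma_i$ sending level $j$ to class $j$). Your explicit invocation of Lemma~\ref{lemf1} for the upper bound $F_{xt}(G)\le\lfloor(t^2+t+1)/t\rfloor=t+1$ is a clean touch that the paper leaves implicit.
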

\begin{proof}Consider a rooted tree with $v$ as its root obtained by joining $t+1$ paths
$P_{t}^{i}: v_{i,1}v_{i,2}...v_{i,t}$, $1\leq i\leq t+1$
each of order $t$ with the root. 
Note that, the minimum fixing set for $G$ contains exactly one
vertex from each path $P_{t}^{i}$, $1\leq i\leq t$, which implies
that $fix(G)=t$. We also observe that, the sets $A_{j}=\{v_{i,j}\},
1\leq j\leq t,\,\,1\leq i\neq j\leq t+1$ and $A=\{v_{i,j}\}, 1\leq
i=j\leq t$ are the maximum number of disjoint fixing sets of minimum
cardinality. Hence, each fixatic partition $\Pi$ contains $t$
classes of cardinality $t$ and one class of cardinality $t+1$. It
follows that $F_{xt}(G)=t+1$.
\end{proof}

\begin{Lemma} For any integer $t\geq 3$, there exists a connected graph
$G$ such that $fix(G)+F_{xt}(G)=t$.
\end{Lemma}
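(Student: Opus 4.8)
The plan is to realize every integer $t\geq 3$ as $fix(G)+F_{xt}(G)$ by producing, for each such $t$, a connected graph $G$ whose fixing number and fixatic number are both controlled exactly. The most economical route is to use the two families already constructed in this section as building blocks. From Lemma~\ref{dan1} we have, for each $s\geq 2$, a rooted ``spider'' $G_s$ (the tree obtained by attaching $s+1$ paths of length $s$ to a common root) with $fix(G_s)=s$ and $F_{xt}(G_s)=s+1$, so that $fix(G_s)+F_{xt}(G_s)=2s+1$; this handles all odd values $t=2s+1\geq 5$. For the even values, and for the small cases, I would exhibit a companion family whose two parameters sum to an even number, e.g.\ a similar spider with a different number of legs, or one of its legs lengthened by one vertex, so that $F_{xt}$ drops back to equal $fix$ (as in the first realizability Lemma, giving $fix+F_{xt}=2t$) while $fix$ ranges over all integers $\geq 2$; combined with the odd family this covers every $t\geq 3$. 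One then only needs to check the two tiny cases $t=3,4$ by hand, for which a path $P_{4}$ or a small spider suffices.

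In more detail, I would carry out the following steps in order. First, fix the target $t$ and split into the cases $t$ odd and $t$ even. Second, for $t=2s+1$ odd with $s\geq 2$, take $G$ to be the spider of Lemma~\ref{dan1} on parameter $s$; invoke that lemma verbatim to get $fix(G)=s$, $F_{xt}(G)=s+1$, hence the sum is $t$. Third, for $t$ even, construct a spider-like tree $G$ with $fix(G)=F_{xt}(G)=t/2$: attach exactly $t/2$ legs (rather than $t/2+1$) of suitable length to the root, so that a minimum fixing set needs one vertex from each of $t/2-1$ legs plus a ``tie-breaker,'' giving $fix(G)=t/2$, while the leftover vertices cannot be split off into an extra fixing class, forcing $F_{xt}(G)=t/2$ as well (this is exactly the mechanism of the $fix(G)=t=F_{xt}(G)$ lemma above, specialized). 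Fourth, dispose of $t=3$ and $t=4$ directly: for $t=4$ the path $P_{4}$ has $fix=1$ and, by Lemma~\ref{ttt} together with a direct count, $F_{xt}(P_4)=3$ wait --- more safely, take a small explicit graph and verify $fix+F_{xt}=3$ and $=4$ respectively by inspection of its automorphism group.

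The verification in each case reduces to two routine-but-careful orbit computations: (i) identifying a minimum fixing set, which for these trees amounts to the standard observation that fixing a non-central vertex on a leg kills every automorphism permuting that leg with its siblings, so one vertex per ``symmetric class of legs'' minus one is necessary and sufficient; and (ii) counting how many pairwise disjoint fixing sets the vertex set can be partitioned into, which is where the leg lengths must be chosen so that the ``remainder'' vertices (those not used up by the maximum packing of minimum fixing sets) still fail to form a fixing class and must be absorbed into one oversized class. I expect step (ii)---pinning down $F_{xt}$ exactly rather than just bounding it via Lemma~\ref{lemf1}---to be the main obstacle, since the upper bound $F_{xt}(G)\le\lfloor n/fix(G)\rfloor$ is typically not tight for these spiders and one must argue by hand that no partition achieves one more class; the trick, as in the earlier lemmas, is to pick the number and length of legs so that $n$ is just short of $(fix(G)+1)\cdot fix(G)$ in the right way, forcing the claimed value.
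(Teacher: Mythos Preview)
Your plan is valid but takes a genuinely different route from the paper. You split on parity and recycle the two spider constructions already established in this section: Lemma~\ref{dan1} with parameter $s$ gives $fix+F_{xt}=2s+1$ for odd $t\ge 5$, and the first realizability lemma ($fix=F_{xt}=s$, $s\ge 2$) gives $fix+F_{xt}=2s$ for even $t\ge 4$, with $t=3$ handled ad hoc (e.g.\ $K_3$ has $fix=2$, $F_{xt}=1$). This works, although your informal description of the even case---``attach exactly $t/2$ legs \ldots plus a tie-breaker''---does not actually match the construction in that lemma, which is a two-centered tree rather than a single-root spider; you are right in the end simply to cite the lemma rather than re-derive it.

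The paper's proof is more economical: one family, no case split. Take the path $P_{t-2}$ and hang a pair of twin leaves $\{v_i',v_i''\}$ from each vertex $v_i$. The resulting caterpillar has $t-2$ disjoint twin pairs, so $fix(G)=t-2$. For $F_{xt}$, every fixing set must contain at least one vertex from each twin pair, and each pair has only two elements, so by pigeonhole at most two pairwise disjoint fixing sets exist; hence $F_{xt}(G)=2$ and $fix(G)+F_{xt}(G)=t$. The payoff is that the step you flagged as the main obstacle---nailing $F_{xt}$ exactly rather than just bounding it---collapses to a one-line pigeonhole on twins, with none of the leg-length bookkeeping your spiders require.
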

\begin{proof} Consider a path $P_{t-2}: v_{1}, v_{2},...,v_{t-2}$.
Join a set $\{v'_{i}, v''_{i}\},\,\ 1\leq i\leq t-2$ of leaves with
each $v_{i},\,\ 1\leq i\leq t-2$ of the path $P_{t-2}$. Note that,
the graph $G$ has $t-2$ twin set of vertices, and hence
$fix(G)=t-2$. We also observe that, each fixatic partition $\Pi$
contains two maximum number of classes, one class of order $t-2$ and
the other class of order $2t-4$. Hence, $F_{xt}(G)=2$ and the result
follows.
\end{proof}

A vertex $v$ of degree at least three in a connected graph $G$ is
called a \emph{major vertex}. Two paths rooted from the same major
vertex and having the same length are called the \emph{twin stems}.

\begin{Lemma}\label{rslt} For any integer $t\geq 1$, there exists a connected graph $G$
such that $F_{xt}(G)-fix(G)=t.$
\end{Lemma}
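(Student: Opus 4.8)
The plan is to construct, for each fixed $t\geq 1$, an explicit connected graph $G$ in which the fixing number is small and constant while the fixatic number exceeds it by exactly $t$. The natural template, following the style of Lemmas \ref{dan1} and the preceding realizability results, is a rooted tree (or a small modification of one) consisting of many twin stems attached to a single major vertex $v$. First I would take a root $v$ together with $m$ pendant paths $P_\ell^i\colon v_{i,1}v_{i,2}\cdots v_{i,\ell}$, $1\leq i\leq m$, each of a common length $\ell$, so that every automorphism of $G$ permutes these $m$ twin stems. For such a graph $fix(G)=m-1$, since a minimum fixing set must contain exactly one vertex from all but one of the stems. The point of choosing the stems long (taking $\ell$ at least $m$, say $\ell=m$) is that one can then pack many pairwise-disjoint transversals of the stems into $V(G)$: the sets $A_j=\{v_{i,j}: 1\le i\le m,\ i\ne \sigma(j)\}$ for a suitable family of "excluded-index" choices each form a fixing set of size $m-1$, and these can be made disjoint by taking $v_{i,j}$ with $j$ running over distinct levels. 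Counting how many such disjoint fixing classes fit, together with one leftover class absorbing the root $v$ and any unused vertices, gives $F_{xt}(G)=\ell$, hence $F_{xt}(G)-fix(G)=\ell-(m-1)$.

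Second, I would pin down the parameters so that this difference is exactly $t$. With $fix(G)=m-1$ and the number of usable levels equal to $\ell$, one wants $\ell-(m-1)=t$; the cleanest choice is to keep $m$ fixed and small — for instance $m=3$, giving $fix(G)=2$ — and set $\ell=t+2$, so that there are $t+2$ levels in each of the three stems and $F_{xt}(G)=t+2$. One must then verify two things: (i) that $t+2$ pairwise disjoint fixing classes genuinely exist, which follows because at each level $j$ one can form a size-$2$ fixing set by taking two of the three vertices $\{v_{1,j},v_{2,j},v_{3,j}\}$ (any two of the three stem-vertices at a common level fix all three stems), and these are disjoint across levels, using $2(t+2)$ of the $3(t+2)+1$ vertices; the remaining $t+2+1$ vertices (one leftover vertex per level plus the root) are dumped into one final class, which is trivially a fixing set as it contains a transversal; and (ii) that no partition into more classes exists, which follows from Lemma \ref{lemf1}: $F_{xt}(G)\le \lfloor n/fix(G)\rfloor=\lfloor (3t+7)/2\rfloor$, and more tightly from the observation that the $3(t+2)$ stem-vertices must be split so that each class meets at least two stems, forcing at most $t+2$ classes once the root is placed. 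For the base case $t=1$ this degenerates gracefully, and one should double-check $t=1$ and $t=2$ by hand.

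The main obstacle I expect is the upper bound $F_{xt}(G)\le t+2$: showing that one cannot squeeze in an extra class. The crude bound from Lemma \ref{lemf1} is $\lfloor n/fix(G)\rfloor$, which here is roughly $3t/2$, far too weak. The sharper argument must exploit the structure: every fixing class must, for each of the three stems, contain at least one stem-vertex that pins down that stem, and in fact — because two stems that are still "free" can be swapped by an automorphism — each class must intersect at least two of the three stems. Since stem $i$ has only $t+2$ vertices, and every class uses up at least two stem-vertices (one from each of two distinct stems), a double-counting over the $3(t+2)$ stem-vertices caps the number of classes at $\lfloor 3(t+2)/2\rfloor$; to bring this down to $t+2$ one argues that in fact each class needs a vertex from a fixed pair of stems, or equivalently that the root together with one stem cannot be partitioned too finely. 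I would make this precise by noting that after deleting the root, the relevant "hard" part is the multiset of stem-levels, and a careful accounting of which levels are available at each stem forces the bound; this is the step where the construction's parameters ($\ell=t+2$, $m=3$) must be chosen so the arithmetic closes exactly rather than with slack.
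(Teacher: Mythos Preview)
Your construction does not give $F_{xt}(G)=t+2$; in fact for the spider with three stems of length $\ell$ one has $F_{xt}(G)=\lfloor 3\ell/2\rfloor$, which is your own double-counting bound and is tight. Every fixing class need only meet \emph{some} two of the three stems, and there is no reason those two stems must be the same pair for every class; by rotating the omitted stem (class $1$ omits stem $3$, class $2$ omits stem $1$, class $3$ omits stem $2$, and so on) one packs $\lfloor 3\ell/2\rfloor$ pairwise-disjoint size-two fixing sets into the $3\ell$ stem vertices. Concretely, for $t=1$ (so $\ell=3$, nine stem vertices plus the root) the partition
\[
\{v_{1,1},v_{2,1}\},\quad \{v_{1,2},v_{3,1}\},\quad \{v_{2,2},v_{3,2}\},\quad \{v_{1,3},v_{2,3},v_{3,3},v\}
\]
has four fixing classes, so $F_{xt}(G)-fix(G)=4-2=2\neq 1$. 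Your proposed sharpening of the upper bound (``each class needs a vertex from a fixed pair of stems'') is precisely the false step; nothing forces such rigidity.

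With $m=3$ held fixed, the attainable differences $F_{xt}(G)-fix(G)=\lfloor 3\ell/2\rfloor-2$ as $\ell$ ranges over $\ell\geq 2$ are $1,2,4,5,7,8,\ldots$, so every multiple of $3$ is missed and the construction cannot be rescued by adjusting $\ell$ alone. The paper avoids this by letting the number of stems grow with $t$: it attaches $t+1$ stems of length $2t-1$ to the root, so that $fix(G)=t$. Now every fixing class must hit at least $t$ of the $t+1$ stems, and the double count over the $(t+1)(2t-1)$ stem vertices yields $F_{xt}(G)\leq \lfloor (t+1)(2t-1)/t\rfloor=2t$, with equality realized by explicit level-transversals. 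The point is that when there are $m$ stems, each class consumes at least $m-1$ stem vertices; tuning $m$ together with $\ell$, not $\ell$ alone, is what makes the arithmetic close exactly.
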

\begin{proof}Consider a rooted tree as $G$ with $v$ as its root obtained by joining
$t+1$ paths $P_{t}^{i}: v_{i,1}v_{i,2}...v_{i,2t-1}$, $1\leq i\leq
t+1$ each of order $2t-1$ with the root. Since all $t+1$ paths are
in fact the twin stems, $fix(G)=t$. Further $A=\{v_{i,j}\},\,\ 1\leq
i=j\leq t$, $B_{j}=\{v_{i,j}\}, 1\leq j\leq t,\,\,1\leq i\neq j\leq
t+1$, $C_{k}=\{v_{i,j}\},\,\ 1\leq k\leq t-1,\,\ j= k+t,\,\,1\leq
i\leq t$ are the disjoint fixing sets of minimum cardinality. Each
fixatic partition $\Pi$ contains $2t$ maximum number of classes in
such a way that, $2t-1$ classes are of order $t$ and one class of
order $2t$. Hence, $F_{xt}(G)=2t$.
\end{proof}
Lemma \ref{rslt}, can also be stated as follows:
\begin{Lemma} For any integer $t\geq 1$, there exists a connected graph
$G$ such that $fix(G)=t$ and $F_{xt}(G)=2t$.
\end{Lemma}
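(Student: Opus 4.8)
The plan is to observe that this Lemma is merely a reformulation of Lemma~\ref{rslt}: the graph $G$ constructed in that proof already satisfies $fix(G)=t$ and $F_{xt}(G)=2t$, so the same $G$ is the required witness, the two formulations being equivalent since $F_{xt}(G)-fix(G)=2t-t=t$. For completeness I would re-verify the two equalities directly on that graph $G$, namely the rooted tree with root $v$ obtained by attaching to $v$ exactly $t+1$ pendant paths $P^{i}: v_{i,1}v_{i,2}\ldots v_{i,2t-1}$, $1\le i\le t+1$, each of length $2t-1$.

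First I would describe $\Gamma(G)$. Because the $t+1$ attached paths have equal length, every automorphism permutes them and every permutation of them is realised, so $\Gamma(G)\cong S_{t+1}$; moreover, fixing any single non-root vertex $v_{i,d}$ forces the entire stem $P^{i}$ to be fixed pointwise (tracing inward to the root and outward to the leaf, each step being forced by distance to the root and by degree). Consequently a set of vertices is a fixing set precisely when it meets at least $t$ of the $t+1$ stems, and a smallest such set picks one vertex from each of some $t$ stems; hence $fix(G)=t$.

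Next I would establish $F_{xt}(G)=2t$. For the lower bound, the sets $A$, $B_{j}$ ($1\le j\le t$) and $C_{k}$ ($1\le k\le t-1$) displayed in the proof of Lemma~\ref{rslt} are $2t$ pairwise disjoint fixing sets, each selecting one vertex from each of $t$ of the stems; their union omits $t$ vertices (the root $v$ together with the $t-1$ deepest vertices of one stem), and since those leftover vertices meet only one stem they do not themselves form a fixing set, so I would absorb them into one of the $2t$ classes, producing a fixatic partition with $2t-1$ classes of size $t$ and one class of size $2t$. Thus $F_{xt}(G)\ge 2t$. For the upper bound, note that $n=(t+1)(2t-1)+1=t(2t+1)$, so Lemma~\ref{lemf1} only gives $F_{xt}(G)\le\lfloor n/t\rfloor=2t+1$; the extra saving comes from the root $v$, which satisfies $\Gamma_v(G)=\Gamma(G)$ and hence can never help a class be a fixing set. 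Therefore every class of a fixatic partition must contain at least $t$ vertices of $V(G)\setminus\{v\}$, and since these contributions are disjoint, a $k$-class fixatic partition satisfies $kt\le|V(G)\setminus\{v\}|=t(2t+1)-1<t(2t+1)$, forcing $k\le 2t$. Combining the two bounds gives $F_{xt}(G)=2t$.

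The only real obstacle is this last step: the crude bound $\lfloor n/fix(G)\rfloor$ of Lemma~\ref{lemf1} is off by one for this graph, and the remedy is the observation that the single globally fixed vertex $v$ is wasted in every class, which lowers the count by one. Everything else --- the description of $\Gamma(G)$ and the explicit list of $2t$ disjoint minimum fixing sets --- is routine.
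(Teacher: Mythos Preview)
Your proposal is correct and follows exactly the paper's approach: the paper treats this lemma as a restatement of Lemma~\ref{rslt} and uses the very same rooted tree (root $v$ with $t+1$ attached paths of order $2t-1$), and you do likewise. Your write-up is in fact more complete than the paper's, since you supply a genuine argument for the upper bound $F_{xt}(G)\le 2t$ (via the observation that the root contributes nothing to any stabilizer, forcing each class to contain at least $t$ non-root vertices), whereas the paper merely asserts the value.
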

Consider a path $P_{2}=uv$. For all integers $t\geq 3$, construct a
connected graph $G$ as follows: If $t$ is even, then join
$\frac{t+4}{2}$ leaves with $u$ and  $\frac{t+2}{2}$ leaves with $v$
of the path $P_{2}.$ If $t$ is odd, then join
$\lceil\frac{t}{2}\rceil+1$ leaves with $u$ and same number of
leaves with vertex $v$.\\ From this type of construction of $G$, we
have a useful result, stated as follows:
\begin{Lemma}\label{rslt11} For any integer $t\geq 3$, there exists a connected graph $G$
such that $fix(G)-F_{xt}(G)=t.$
\end{Lemma}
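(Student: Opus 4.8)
The plan is to exhibit, for each integer $t\geq 3$, an explicit connected graph $G$ with $fix(G)-F_{xt}(G)=t$, using precisely the construction already described in the paragraph preceding the statement: start from $P_2 = uv$, and attach leaves to $u$ and $v$ according to the parity of $t$. So the proof is constructive and breaks into two parallel verifications, one for $t$ even and one for $t$ odd.

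First I would compute $fix(G)$. The leaves attached to $u$ form one twin-set, and the leaves attached to $v$ form another twin-set (note $u$ and $v$ themselves are not twins once each has at least two pendant neighbours, since $u$ is adjacent to $v$ while the pendants of $u$ are not, and the neighbourhoods differ). A standard fact (used repeatedly in the preceding lemmas) is that from a twin-set of size $s$ one must fix all but one vertex, contributing $s-1$ to any minimum fixing set, and conversely fixing $s-1$ of them kills the symmetric group acting on that twin-set. For $t$ even, with $\frac{t+4}{2}$ leaves at $u$ and $\frac{t+2}{2}$ leaves at $v$, this gives $fix(G) = (\frac{t+4}{2}-1)+(\frac{t+2}{2}-1) = \frac{t+4}{2}+\frac{t+2}{2}-2 = t+1$. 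For $t$ odd, with $\lceil t/2\rceil+1$ leaves at each of $u$ and $v$, it gives $fix(G) = 2(\lceil t/2\rceil+1-1) = 2\lceil t/2\rceil = t+1$ as well. Once enough pendants are fixed to destroy both symmetric actions, the only remaining possible nontrivial automorphism would swap $u$ with $v$, but that is impossible because the number of leaves at $u$ and at $v$ differ (in the even case $\frac{t+4}{2}\neq\frac{t+2}{2}$; in the odd case the counts are equal but then the overall vertex count forces us to recheck — actually in the odd case the symmetry $u\leftrightarrow v$ IS killed only after the pendant-fixing, so the count $fix(G)=t+1$ is correct, and one verifies directly that no leftover automorphism exists). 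So in both cases $fix(G)=t+1$.

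Next I would show $F_{xt}(G)=1$, whence $fix(G)-F_{xt}(G)=(t+1)-1=t$, as required. By Lemma~\ref{lemf1}, $F_{xt}(G)\leq\lfloor n/fix(G)\rfloor$ where $n=|V(G)|$; in the even case $n = 2 + \frac{t+4}{2}+\frac{t+2}{2} = t+5$, so $\lfloor (t+5)/(t+1)\rfloor = 1$ for all $t\geq 5$, and one checks $t=3,4$ separately. In the odd case $n = 2 + 2(\lceil t/2\rceil+1) = 2\lceil t/2\rceil + 4 = t+5$ (for $t$ odd), giving the same bound $\lfloor (t+5)/(t+1)\rfloor=1$ for $t\geq 5$, with small cases $t=3$ done by hand. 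Since $F_{xt}(G)\geq 1$ always (the whole vertex set is a fixing set), equality $F_{xt}(G)=1$ follows.

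The main obstacle I anticipate is the boundary bookkeeping: getting the leaf-counts exactly right so that (a) $fix(G)$ comes out to $t+1$ on the nose in both parities, and (b) the ratio $n/fix(G)$ stays strictly below $2$ so that $\lfloor n/fix(G)\rfloor=1$ — this is what forces the specific "$+1$" and "$+2$"/"$\lceil\cdot\rceil+1$" offsets in the construction, and the small values $t=3,4$ (and in the odd case $t=3$) need to be inspected directly rather than via the floor bound. A secondary subtlety is justifying that, after fixing all-but-one pendant in each twin-set, there is genuinely no residual automorphism: in the even case this is immediate from the asymmetry of the two leaf-counts, but I would state it cleanly as: any automorphism of $G$ permutes pendant vertices among themselves and fixes the unique non-pendant vertices $u,v$ (as they are the only vertices of degree $\geq 3$, or are distinguished by the differing leaf-counts), so $\Gamma(G)$ is exactly the product of the two symmetric groups on the leaf-sets, and a fixing set must and need only break both.
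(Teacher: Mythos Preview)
Your proposal is correct and follows exactly the construction the paper supplies in the paragraph immediately preceding the lemma; in fact the paper gives no further argument beyond that construction, so your write-up is strictly more detailed than what appears there. Two small clean-ups: in the even case the floor $\lfloor (t+5)/(t+1)\rfloor$ already equals $1$ for $t=4$, so the only value genuinely requiring a direct check is the odd case $t=3$ (where the floor is $2$, but two disjoint fixing sets would each need at least two of the three leaves at $u$, which is impossible); and in the odd case the extra $u\leftrightarrow v$ symmetry is automatically killed as soon as a single pendant is fixed, so it never raises $fix(G)$ above $t+1$.
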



Let $G$ be a connected graph. A \emph{matching} $M$ in a graph $G$
is a set of edges such that no two edges have a common vertex. The
\emph{size} of matching is the number of edges in it. A vertex
contained in an edge of $M$ is \emph{covered} by $M$. A matching
that covers every vertex of the graph $G$ is called a \emph{perfect
matching} \cite{chris}. Note that, each element in a matching is an
edge which is two vertices subset of the vertex set $V(G)$.


\begin{Proposition} Let $G$ be a connected graph of order $n\geq 4$ with
$F_{xt}(G)=2$, then each partite of every fixatic partition $\Pi$ of
$V(G)$ induces an edge if and only if $G\in \{C_{4}, K_{2,2},
K_{1}+P_{3}, K_{4}-e \}$, where $K_{4}-e$ be the graph obtained by
deleting any one edge from $K_{4}$.
\end{Proposition}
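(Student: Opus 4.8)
The plan is to prove this characterization by a two-directional argument, with the easy direction being the verification that each of the four listed graphs works, and the hard direction being the classification.

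\medskip

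\noindent\textbf{The ``if'' direction.} For each $G\in\{C_4, K_{2,2}, K_1+P_3, K_4-e\}$ I would first observe that $C_4\cong K_{2,2}$, so really there are three graphs up to isomorphism. For each one I would exhibit a fixatic partition into two classes, check that each class is a fixing set (using the automorphism group, which is small in every case), confirm $F_{xt}(G)=2$ via Lemma \ref{lemf1} (the bound $F_{xt}(G)\le\lfloor n/fix(G)\rfloor$ with $n=4$ and $fix(G)=2$ gives $F_{xt}(G)\le 2$, and two disjoint fixing sets of size $2$ establish equality), and then verify that in \emph{every} fixatic partition $\Pi=\{F_1,F_2\}$ both $F_1$ and $F_2$ have cardinality exactly $2$ — forced since $n=4$ and each class has size $\ge fix(G)=2$ — and that the induced subgraph on each class is a single edge. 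The last point needs a short case check: one must rule out that a fixing pair could be a non-edge. For $K_4-e$ the only non-edge is the ``missing'' pair, which is a twin-set, hence not a fixing set; for $K_1+P_3$ the non-edges all involve the two endpoints of the $P_3$, which are twins; for $C_4$ the two non-edges are the diagonal pairs, each a twin-set. So in each case every fixing pair is an edge, giving the claim.

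\medskip

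\noindent\textbf{The ``only if'' direction.} Assume $G$ is connected of order $n\ge 4$, $F_{xt}(G)=2$, and every class of every fixatic partition induces an edge. The key structural consequence is: since $F_{xt}(G)=2$ there is a partition $\{F_1,F_2\}$ of $V(G)$ into two fixing sets, and each $F_i$ induces an edge, so $|F_1|=|F_2|=2$ and hence $n=4$. Now $G$ is a connected graph on $4$ vertices, so $G$ is one of: $P_4$, $K_{1,3}$, $C_4$, $K_4-e$ ($=$ ``paw'' complement, the $4$-cycle with a chord), the ``paw'' (triangle with a pendant), or $K_4$. I would eliminate the bad ones: $K_4$ has $fix=3$ so $F_{xt}(K_4)=1\ne 2$; $P_4$ is rigid, $fix(P_4)=1$, giving $F_{xt}(P_4)=4\ne 2$ (or at any rate $\ne 2$), so it is excluded; $K_{1,3}$ has $F_{xt}=1$ as noted in the text after Example \ref{exp1l}; the paw (triangle $abc$ with pendant $d$ at $a$) is rigid, so $F_{xt}=4\ne 2$. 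What remains is exactly $C_4$ ($=K_{2,2}$), $K_4-e$, and the triangle-with-two-pendants... — here I must be careful: $K_1+P_3$ is the graph on $4$ vertices where one vertex is joined to all of a path on $3$ vertices, which is precisely $K_4$ minus one edge with... no: $K_1+P_3$ has $3+2=5$ edges, while $K_4-e$ has $5$ edges; in fact $K_1+P_3\cong K_4-e$ when one checks degrees ($4,4,4,4$ hmm — $K_1+P_3$: the $K_1$ vertex has degree $3$, the two ends of $P_3$ have degree $2$, the middle of $P_3$ has degree $3$; $K_4-e$: two vertices of degree $3$, two of degree $2$) — so indeed $K_1+P_3\cong K_4-e$, and the genuinely distinct survivors are $C_4$ and $K_4-e$, matching the statement's list (which lists $C_4$, $K_{2,2}$, $K_1+P_3$, $K_4-e$ redundantly).

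\medskip

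\noindent\textbf{Main obstacle.} The real work is the bookkeeping in the forward direction: establishing cleanly that the hypothesis forces $n=4$, and then checking the ``induces an edge'' condition fails for every connected $4$-vertex graph other than $C_4$ and $K_4-e$ — in particular one must verify for the surviving candidates that the condition holds for \emph{all} fixatic partitions, not just one, which is what the case analysis on non-edges above handles. A subtler point to get right is whether ``order $n\ge 4$'' together with $F_{xt}(G)=2$ and the edge-inducing hypothesis really collapses to $n=4$: since a fixatic partition into $F_{xt}(G)=2$ classes exists, and each class induces an edge (so has size $2$), we get $n=|F_1|+|F_2|=4$ immediately, which is the crux and is short once stated correctly.
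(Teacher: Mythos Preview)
The paper states this proposition without proof, so there is no argument to compare against; I will assess your proposal on its own merits. Your overall architecture --- deduce $n=4$ from the hypothesis, then sift the six connected $4$-vertex graphs --- is correct and efficient, and your ``if'' direction (including the observation that $C_4\cong K_{2,2}$ and $K_1+P_3\cong K_4-e$, so the list is redundant) is handled well.

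There is, however, a genuine error in the ``only if'' case analysis. The paw (triangle $abc$ with pendant $d$ at $a$) is \emph{not} rigid: the transposition $(b\,c)$ is a nontrivial automorphism, since $b$ and $c$ are twins. Consequently $fix(\text{paw})=1$, but the only fixing singletons are $\{b\}$ and $\{c\}$ (both $\{a\}$ and $\{d\}$ are stabilized by $(b\,c)$). Since every fixing set must meet $\{b,c\}$, one gets $F_{xt}(\text{paw})=2$, not $4$ as you claim. Your stated reason for eliminating the paw is therefore false. The paw is nonetheless correctly excluded, but by the \emph{other} hypothesis: every fixatic $2$-partition of the paw has a class that fails to induce an edge --- e.g.\ $\{\{a,b\},\{c,d\}\}$ is fixatic but $cd\notin E$, and $\{\{b\},\{a,c,d\}\}$ has a singleton class; a short check of the remaining $2$-partitions confirms none works. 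You need to replace the rigidity claim with this verification.

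A minor related slip: $P_4$ is also not rigid (the reversal $(1\,4)(2\,3)$ is an automorphism), though here your conclusion $F_{xt}(P_4)=4\neq 2$ survives, since every vertex of $P_4$ is moved by the reversal and hence every singleton is already a fixing set.
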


Let $\Psi=(H_{n})_{n\geq 1}$ be a family of graphs $H_n$ of order
 $\varphi(n)$ for which $lim_{n\rightarrow
 \infty}\varphi(n)=\infty$. If there does not exist a constant number $c>0$
 such that $F_{xt}(H_n)\leq c$ for every $n\geq 1$, then we say that $\Psi$ has
 \emph{unbounded fixatic number}.

\begin{Example} Let $G=Cay(Z_n; S)$ be an undirected Cayley graph with vertex
set $V(G)=\{v_0,v_1,...,v_{n-1}\}$ and $S$ is its connection set.
Let $n=2p+1$ is prime. We know that, $fix(Cay(Z_n; S))=2$ (every
fixing set of $Cay(Z_n; S)$ must contain at least two vertices from
$V(G)$). So, to form a fixatic partition of maximum cardinality, we
divide the vertex set $V(G)$ into $p-1$ classes of order two, and
one class of order three, otherwise all the classes may not be
fixing. So, there will be $(p-1)+1$ classes, which form a fixatic
partition. Hence, $F_{xt}({Cay(Z_n; S)})=p$, which implies that
$G=Cay(Z_n; S)$ is a family of graphs with unbounded fixatic number.
\end{Example}

A family $\Omega=(G_{n})_{n\geq 1}$ of connected graphs is a family
with \emph{constant fixing number} if there exists a constant
$0<c<\infty$ such that $fix(G_{n})=c,$ for all $n$. The family of
paths $P_{n}, n\geq 2$ is a family with constant fixing number,
because $fix(P_{n})=1$.

\begin{Theorem} A family of connected graphs with constant fixing
number is a family of graphs with unbounded fixatic number.
\end{Theorem}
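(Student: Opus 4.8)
The plan is to show that if $\Omega=(G_n)_{n\geq 1}$ is a family of connected graphs with constant fixing number, say $fix(G_n)=c$ for all $n$ (with $c$ a fixed positive integer, since the graphs are symmetric), then the family $\Omega$ has unbounded fixatic number in the sense defined just above. First I would recall the key upper/lower bound from Lemma~\ref{lemf1}: for every connected graph $G$ of order $n\geq 2$ we have $F_{xt}(G)\leq \lfloor n/fix(G)\rfloor$, but more importantly we also want a matching \emph{lower} bound. The natural lower bound to invoke is $\mathcal{L}(G)\leq F_{xt}(G)$ together with $fix(G)\leq loc(G)$; however, the cleanest route is to argue directly that $F_{xt}(G_n)\to\infty$, because if $F_{xt}$ were bounded by some constant $c'$ on the whole family, then Lemma~\ref{lemf1} would force $\lfloor \varphi(n)/c\rfloor \geq$ the actual fixatic number only from above, which does not immediately give a contradiction — so the real content is the \emph{construction} of many disjoint fixing classes.

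The key steps, in order, are: (1) Since $fix(G_n)=c$ is constant while the orders $\varphi(n)=|V(G_n)|\to\infty$ (this growth is part of the hypothesis that $\Omega$ is a family of graphs in the sense of the paragraph preceding the Example, or else one observes that a family of fixed order could only contain finitely many non-isomorphic graphs so WLOG $\varphi(n)\to\infty$), the quantity $\lfloor \varphi(n)/c\rfloor$ tends to infinity. (2) Show that one can actually \emph{realize} a fixatic partition whose number of classes grows: greedily extract disjoint fixing sets of size $c$ from $V(G_n)$. The point is that in a graph with $fix(G_n)=c$, "most" $c$-subsets that are in general position are fixing sets; more carefully, one argues that after removing any $O(1)$ vertices the remaining induced structure still admits a fixing set of bounded size, so one can peel off $\Theta(\varphi(n)/c)$ disjoint fixing classes and dump the bounded remainder into one of them. (3) Conclude $F_{xt}(G_n)\geq \lfloor \varphi(n)/c\rfloor - O(1) \to \infty$, hence no constant $c'$ bounds $F_{xt}(G_n)$, which is exactly the definition of unbounded fixatic number. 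One could also simply cite the Example above as the prototypical instance ($Cay(Z_n;S)$ with $fix=2$ giving $F_{xt}=p\to\infty$) and present the general argument as its natural abstraction.

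The main obstacle is step~(2): constant fixing number does \emph{not} by itself guarantee that $V(G_n)$ decomposes into roughly $\varphi(n)/c$ disjoint fixing sets — a priori the fixing sets could all be forced to overlap in a small "core" of vertices (consider a graph where every fixing set must contain one particular fixed vertex; then $F_{xt}=1$ regardless of order). So the theorem as stated is actually \emph{false} in full generality unless one imposes something more, and the honest proof must either (a) add a hypothesis ruling out such a core — e.g. requiring that the family have a sufficiently rich automorphism structure so that fixing sets can be translated around, as happens for vertex-transitive graphs like the Cayley example — or (b) interpret "family of connected graphs with constant fixing number" implicitly as excluding the degenerate cases. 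I would therefore frame the proof around vertex-transitive (or suitably symmetric) families: when $\Gamma(G_n)$ acts transitively, for any minimum fixing set $F$ and any $g\in\Gamma(G_n)$ the image $g(F)$ is again a fixing set, and a pigeonhole/covering argument produces $\lfloor \varphi(n)/c\rfloor$ pairwise disjoint translates (up to the bounded leftover), yielding $F_{xt}(G_n)=\lfloor \varphi(n)/c\rfloor$ and hence the claimed unboundedness; the degenerate "core" obstruction is then precisely what the transitivity hypothesis removes, and I would flag this as the crux of the argument.
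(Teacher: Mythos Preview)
Your diagnosis is sharper than the paper's own argument. The paper's proof runs as follows: since each class $F_j$ of a fixatic partition satisfies $|F_j|\ge c$, ``there are at least $\lfloor n/c\rfloor$ classes in $\Pi$,'' and hence $F_{xt}(G_n)$ is unbounded. That inference is simply the inequality in the wrong direction: $|F_j|\ge c$ for every $j$ gives an \emph{upper} bound $F_{xt}(G_n)\le \lfloor n/c\rfloor$ (this is exactly Lemma~\ref{lemf1}), never a lower bound. So the paper does not supply the missing step~(2) you isolate; it asserts it by a slip of the pen.

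Your suspicion that the theorem is false as stated is correct, and your ``core'' obstruction can be made completely explicit. Take any connected rigid graph $R_n$ of order $n$ with minimum degree at least $2$ (such graphs exist for all sufficiently large $n$), pick a vertex $w\in V(R_n)$, and attach two pendant twins $u_1,u_2$ to $w$. In the resulting graph $G_n$ the only nontrivial automorphism is the transposition $(u_1\,u_2)$, so $fix(G_n)=1$; but a set $F\subseteq V(G_n)$ is fixing if and only if $F\cap\{u_1,u_2\}\neq\emptyset$, whence $F_{xt}(G_n)=2$ for every $n$ while $|V(G_n)|\to\infty$. This family has constant fixing number and \emph{bounded} fixatic number, refuting the theorem. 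Your proposed repair via vertex-transitivity is the natural one, though note that even there the ``disjoint translates'' step needs more than pigeonhole: translates $g(F)$ of a minimum fixing set need not tile $V(G)$, so one still has to argue (e.g.\ by a greedy covering bound) that enough pairwise disjoint fixing sets exist. In any case, you have correctly located the gap, and the paper's proof does not close it.
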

\begin{proof}Let $\Omega=(G_{n})_{n\geq 1}$ be a family of graphs
with constant fixing number $fix(G_{n})$ and fixatic number
$F_{xt}(G_{n})$, then there exists a constant $c>0$ such that
$fix(G_{n})=c$ for all $n$. In order to form a fixatic partition
$\Pi=\{F_{1}, F_{2},...,F_{F_{xt}}\}$ of $V(G_{n})$, each $F_{j}$
has $|F_{j}|\geq c$. So, there are at least $\lfloor
\frac{n}{c}\rfloor$ classes in $\Pi$, which implies that there does
not exist a constant $c>0$ such that $F_{xt}(G_{n})\leq c$ for all
$n$.
\end{proof}




\begin{thebibliography}{9}

\bibitem{alb} M. O. Albertson and K. L. Collins, Symmetry breaking in
graphs, \emph{Electron. J. Combin.}, \textbf{3}(1996).


\bibitem{biggs}
N. L. Biggs, Discrete Mathematics,  \emph{Oxford University Press,
New York} (1989).




\bibitem{bou}D. L. Boutin, Identifying graphs automorphisms using
determining sets, \emph{Electron. J. Combin.}, \textbf{13}(2006).


\bibitem{cac} J. Caceres, D. Garijo, M. L. Puertas and C. Seara, On
the determining number and the metric dimension of graphs,
\emph{Electron. J. Combin.}, \textbf{17}(2010).



\bibitem{cehsz} G. Chartrand, D. Erwin, M. A. Henning, P. J. Slater and P. Zhang,
The locating- chromatic number of a garph, \emph{Bull. Inst. Combin.
Appl}., \textbf{36}(2002), 89-101.

\bibitem{csz} G. Chartrand, E. Salehi and  P. Zhang, The partition dimension of a
graph, \emph{Aequationes Math}., \textbf{59}(2000), 45-55.

\bibitem{Coc} E. J. Cockayne and P. J. Hedetniemi, Towards a theory of domination in graphs,
\emph{Networks}, \textbf{7}(1977), 247-261.





\bibitem{Erw} D. Erwin and F. Harary, Destroying automorphisms by
fixing nodes,  \emph{Discrete Math}., \textbf{306}(2006), 3244-3252.






\bibitem{Gib} C. R. Gibbons and J. D. Laison, Fixing numbers of
graphs and groups, \emph{Electron. J. Combin}., \textbf{16}(2009).

\bibitem{chris}
C. Godsil and G. Royle, Algebraic Graph Theory, Graduate Texts in
Mathematics, \emph{Springer-Verlag, New York} (2001).

\bibitem{Har} F. Harary, Methods of destroying the symmetries of
graph, \emph{Bull. Malasyan Math., Sc. Soc.}, \textbf{24}(2001),
183-191.

\bibitem{mel} F. Harary, and R. A. Melter, On the metric dimension of a graph, \emph{Ars Combin}., \textbf{2}(1976), 191-195.


\bibitem{ly}
K. Lynch, Determining the orientation of a painted sphere from a
single image: a graph coloring problem, Preprint.

\bibitem{sim} M. Salman, I. Javaid and M. A. Chaudhary, On the locatic number of graph, \emph{Int. J. Comput. Math}., \textbf{90}(2013).


\bibitem{slt}P. J. Slater, Leaves of trees, \emph{Congress. Numer.}, \textbf{14}(1975), 549-559.

\end{thebibliography}
\end{document}